\documentclass[11pt,a4paper]{amsart}

\usepackage[a4paper,margin=1in]{geometry}

\usepackage{amsmath,amssymb,amsthm,mathtools}
\usepackage{stmaryrd,mathrsfs}
\usepackage{microtype}
\usepackage[backref=page, colorlinks = true, linkcolor = black, citecolor = blue]{hyperref}
\usepackage[nameinlink,noabbrev]{cleveref}
\usepackage{enumerate}
\usepackage{appendix}
\usepackage{comment}
\usepackage{xcolor}
\usepackage{graphicx}

\allowdisplaybreaks

\newcommand{\R}{\mathbb R}
\newcommand{\C}{\mathbb C}
\newcommand{\Sph}{\mathbb S}
\newcommand{\Z}{\mathbb Z}
\newcommand{\M}{\mathbf M}
\newcommand{\spt}{\operatorname{spt}}
\newcommand{\dist}{\operatorname{dist}}
\newcommand{\dd}{\,d}
\newcommand{\bracket}[1]{\llbracket #1\rrbracket_2}

\DeclareMathOperator{\Imm}{Im}

\newcommand{\PhiMap}{\Phi}

\newcommand{\tCone}{\widetilde{C}}
\newcommand{\mres}{\mathbin{\vrule height 1.6ex depth 0pt width
0.13ex\vrule height 0.13ex depth 0pt width 1.3ex}}

\newtheorem{theorem}{Theorem}[section]
\newtheorem{lemma}[theorem]{Lemma}
\newtheorem{proposition}[theorem]{Proposition}
\newtheorem{conjecture}[theorem]{Conjecture}

\theoremstyle{remark}
\newtheorem{remark}[theorem]{Remark}

\title[White's cone]{White's cone over the M\"obius band is area-minimising}
\author[Marco A. M. Guaraco]{Marco A. M. Guaraco}
\address{Department of Mathematics, Huxley Building, South Kensington Campus, Imperial College London, London, SW7 2AZ, UK}
\email{guaraco@imperial.ac.uk}

\author[Davide Parise]{Davide Parise}
\address{Department of Mathematics, Zeeman Building, University of Warwick, Gibbet Hill Road, Coventry CV4 7AL, UK}
\email{Davide.Parise@warwick.ac.uk}

\date{\today}
\subjclass[2020]{49Q05, 53A10, 49Q15}
\keywords{area-minimising cone, flat chains modulo two, M\"obius band, helicoid, slicing}

\begin{document}

\begin{abstract}
We prove that the cone $C(\Sigma)$ in $\R^4$  over a suitable minimal M\"obius band $\Sigma\subset \Sph^3$ is locally mass-minimising with coefficients in $\Z_2$. This proves a conjecture by Brian White in the context of models of boundary singularities of  Plateau's problem in ambient dimension $4$ or higher. The key insight is that the cylindrical slices $C(\Sigma)\cap \{(z,w)\in \R^2 \times \R^2 : |w|=r\}$ are minimal surfaces in a flat rank-one translation quotient of $\R^3$. In fact, they correspond to non-orientable minimising helicoids previously studied by Antonio Ros. We also show that, up to rigid motions, the minimising cones $C(\Sigma)\times \R^{n-4}\subset \R^n$ are the only non-planar minimal hypercones with boundary $\R^{n-2}$ which are smooth away from the boundary that satisfy the minimal cylindrical slice property.
\end{abstract}

\maketitle

\vspace{-10pt}

\section{Introduction}

A well-known result of Hardt--Simon establishes boundary regularity in all dimensions for mass-minimising hypersurfaces with integer coefficients \cite{Hardt-Simon}. However, in many applications it is more natural to work with currents having coefficients in $\Z_2$, in which case the situation is radically different. In his lecture notes on
geometric measure theory, Brian White points out that boundary singularities occur already in dimension $4$. His argument shows these singularities sometimes exist for topological reasons and cannot be avoided perturbatively.  He also conjectures that a specific cone in $\R^4$ is a model for this phenomenon; see \cite[Section~10, pp.~57--58]{WhiteChodosh2014}. We prove this conjecture in \Cref{thm:main}. 

\begin{conjecture}[White {\cite[Conjecture~10.5, p.~58]{WhiteChodosh2014}}]
The cone $C(\Sigma)$ over the minimal M\"obius band
\[
 \Sigma=\{(z,w)\in\Sph^3:z^2\overline w\in\R_{\geq0}\}
\]
is mass-minimising modulo two.
\end{conjecture}

The surface $\Sigma$ can also be obtained by joining points on two orthogonal great circles via geodesic segments. One end moves with speed one around the first circle (the boundary of the M\"obius band) and the other with speed two on the second circle (the centre of the band). Its stereographic projection is a shell-like embedding of a M\"obius band in $\R^3$ sometimes called the \emph{sudanese} M\"obius band, see Figure \ref{fig:sudanese-mobius}. 

\begin{figure}[htbp] 
\centering\includegraphics[width=350pt]{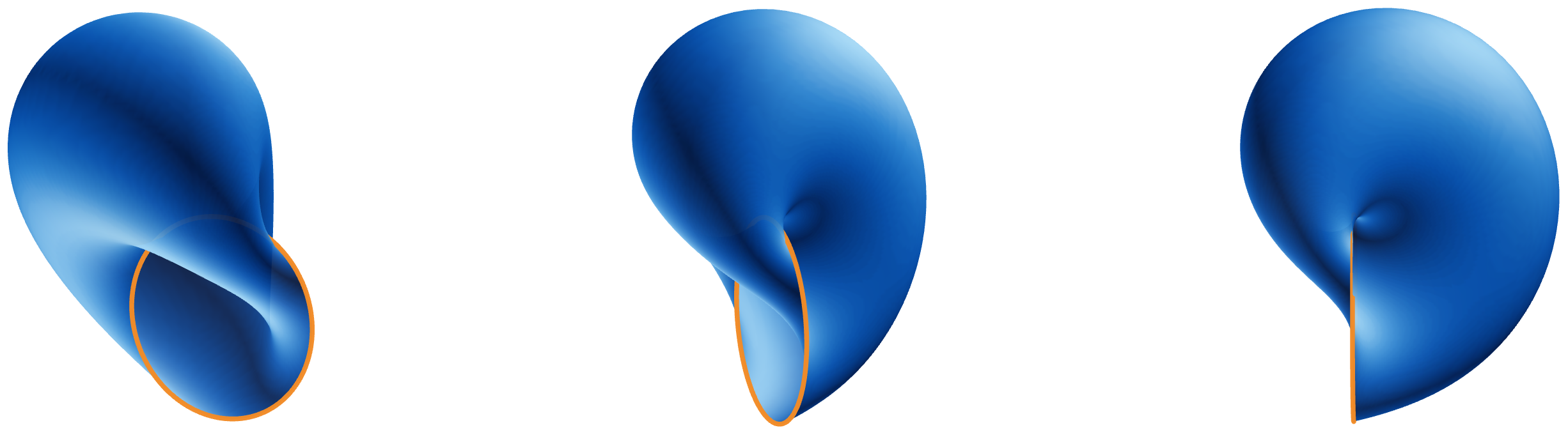}
    \caption{Three views of a stereographic projection of $\Sigma$.}
    \label{fig:sudanese-mobius}
\end{figure}

Local minimisation modulo two is preserved under products with
Euclidean spaces. Our result therefore shows that $C(\Sigma)\times\R^{n-4}\subset\R^n$ are area-minimising cones with boundary $\R^{n-2}$ for every $n\geq4$. Their singular set is $\{0\}\times\R^{n-4}$, so these examples attain the upper bound $n-4$ for the dimension of the boundary singular set of mass-minimising hypersurfaces modulo two; see \cite[Theorem~10.3]{WhiteChodosh2014}.

The cylindrical geometry used in the proof also leads to a characterisation of these cones. Under the regularity and connectedness assumptions of \Cref{thm:characterisation}, a minimal hypercone with linear boundary and a minimal cylindrical slice is either a half-hyperplane or a Euclidean product of White's cone. This characterisation requires only minimality of the cone and its slice. 

\subsection*{AI Declaration Statement} AI tools were used during the early stages of this project to help navigate the relevant literature and to carry out simple computations suggested and independently checked by the authors. At later stages, they were used to assist with improving the presentation of the manuscript and to check some standard definitions from geometric measure theory. The main ideas, arguments, and results of the paper are entirely the authors’ own.

\section{Organisation and main results}

Our main result follows via a slicing argument. We reduce the mass comparison for the cone against competitors to area comparisons in cylindrical slices, which turn out to be nonorientable helicoids in a flat Euclidean quotient. More precisely, let
\[
 C=C(\Sigma)=\{(z,w)\in\C^2:z^2\overline w\in\R_{\geq0}\},
 \qquad P=\{w=0\},
 \qquad \mathbf C_R=\bracket{C}\mres B_R.
\]
The cone $C$ is smooth away from the origin, where it has a singularity. As a chain modulo two, it has boundary $\partial\bracket{C}=\bracket{P}$. In \Cref{thm:main}, we show that for every $R>0$ and every compactly supported rectifiable flat $3$-chain $T$ with coefficients in $\Z_2$,
\[
 \partial T=\partial\mathbf C_R
 =\bracket{P\cap B_R}+\bracket{R\Sigma}
 \quad\Longrightarrow\quad
 \M(T)\geq\M(\mathbf C_R).
\]
This means $\bracket{C}$ is locally mass-minimising modulo two.

The key observation in the proof is regarding the level sets of the distance to the
boundary
\[
 p(z,w)=|w|.
\]
For each $\sigma>0$, the level-set slice $N_\sigma=p^{-1}(\sigma)$ is isometric to the flat product $\R^2\times(\R/2\pi\sigma\Z)$. Under this isometry, $C\cap N_\sigma$ is the quotient of a helicoid of pitch $2\sigma$ by the axial translation of length $2\pi\sigma$. This is precisely the nonorientable quotient
covered by Ros's area-minimisation theorem \cite[Theorem~13]{Ros2006}. Consequently, its compact subdomains minimise
mass modulo two in $N_\sigma$; the passage from smooth surfaces to (arbitrary) rectifiable flat-chain competitors is given in
\Cref{lem:Ros-flat-chain}. Finally, we slice a competitor $T$ by $p$ and integrate the resulting mass comparisons. Since $|\nabla_C p|=1$ almost everywhere on $C$, the coarea formula gives equality for the cone, while the slicing inequality gives the required lower bound for the competitor.

Our second main result is the characterisation in \Cref{thm:characterisation}. Let $n\geq4$, and let $T$ be a rectifiable flat $(n-1)$-chain modulo two of locally finite mass, with support $K$ and boundary $\partial T=\bracket{L}$, where $L\subset\R^n$ is an $(n-2)$-plane. Suppose that $K\setminus L$ is a smooth embedded minimal hypersurface and that $K$ is a smooth hypersurface with boundary $L$ near at least one point of $L$. If, for some $\sigma>0$, the entire slice
\[
 K\cap\{q\in\R^n:\dist(q,L)=\sigma\}
\]
is connected and minimal in the corresponding cylinder, then $K$ is either a half-hyperplane with boundary $L$ or, up to an orthogonal transformation,
\[
 K=C(\Sigma)\times\R^{n-4}.
\]
One positive radius suffices, since homotheties centred at the origin identify all positive cylindrical slices.

The proof first shows that the cone meets these cylinders orthogonally. On the Euclidean covering space of a cylinder, its slice is then foliated by hyperplanes whose normals rotate at constant speed. The existence of a regular boundary point forces a single half-turn, identifying the cone with a Euclidean product of White's cone.

The paper is organised as follows. Section~\ref{sec:cone} describes the geometry of the M\"obius band and its cone, establishes the boundary identity, and fixes our conventions for flat chains modulo two. Section~\ref{sec:helicoid} recalls Ros's theorem, formulates its flat-chain consequence, and identifies the cylindrical slices with quotient helicoids. Section~\ref{sec:proof} combines the slice comparisons with coarea to prove \Cref{thm:main}. Section~\ref{sec:characterisation} proves the characterisation of minimal cones with connected minimal cylindrical slices under the stated boundary regularity assumptions. Appendix~\ref{appendix:double-cover} records the algebraic relation between a double-cover lift of the cone and the Clifford torus, while Appendix~\ref{appendix:Ros-theorem} recalls Ros' arguments for the sake of completeness. 

\section{The M\"obius band and its cone}\label{sec:cone}

We identify $\R^4$ with $\C^2$ and write $(z,w)=(x_1+ix_2,x_3+ix_4).$ The construction suggested by White starts from the two orthogonal great circles
\[
    C_z:=\{(z,0):|z|=1\}=\{w=0\}\cap\Sph^3,
\]
and
\[
    C_w:=\{(0,w):|w|=1\}=\{z=0\}\cap\Sph^3.
\]
One lets a point move along $C_z$ with speed one and a point move along $C_w$ with speed two, and joins the corresponding points by the shortest geodesic segment in $\Sph^3$. Equivalently, setting
\[
    b(t)=(e^{it},0),\qquad c(t)=(0,e^{2it}),\qquad t\in\R/2\pi\Z,
\]
a convenient parametrisation of the resulting surface is
\begin{equation}\label{eq:f-link}
    f(s,t)=\cos s\,c(t)+\sin s\,b(t)
    =\bigl(\sin s\,e^{it},\cos s\,e^{2it}\bigr),
\end{equation}
where $ -\pi/2\leq s\leq \pi/2,$ and $0\leq t\leq\pi.$ Since $f(s,0)=f(-s,\pi),$ the quotient of the parameter rectangle under $(s,0)\sim(-s,\pi)$ is a M\"obius band. Its image $\Sigma\subset\Sph^3$ is smoothly embedded, its centre curve is
\[
    \{f(0,t):0\leq t\leq\pi\}=C_w,
\]
and its two parameter-boundary curves join to form the single great circle $ \partial\Sigma=C_z.$ Equivalently, the same image is obtained from
\[
    f(s,t)=\bigl(\sin s\,e^{it},\cos s\,e^{2it}\bigr),
    \qquad
    0\leq s\leq\frac{\pi}{2},
    \quad
    t\in\R/2\pi\Z;
\]
this second parametrisation is two-to-one only along the centre circle
$C_w$.

For completeness, set
\[
    h(s)^2:=\sin^2s+4\cos^2s=1+3\cos^2s.
\]
A direct computation gives
\[
    f^*g_{\Sph^3}=ds^2+h(s)^2\,dt^2,
    \qquad
    \nu=\frac{1}{h(s)}
    \bigl(2\cos s\,i e^{it},-\sin s\,i e^{2it}\bigr),
\]
where $\nu$ is a local unit normal to $\Sigma$ in $\Sph^3$. The metric is nondegenerate, including at $s=0$ and at the boundary. On the full parameter cylinder the free involution $\iota(s,t)=(-s,t+\pi)$ satisfies $f\circ\iota=f$ and $\nu\circ\iota=-\nu$. If $w\neq0$, equality of two images determines $t$ modulo $\pi$ and then the signed parameter $s$ modulo $\iota$; at $w=0$, the two edges are identified into a single embedded circle. Thus, $f$ induces an embedding of the M\"obius quotient, and $\nu$ does not descend to a global normal on it. With respect to this local normal, the second fundamental form satisfies
\[
    A_{ss}=A_{tt}=0,
    \qquad
    A_{st}=\frac{2}{h(s)}.
\]
It follows that
\[
    H_\Sigma=0,
    \qquad
    |A_\Sigma|^2=\frac{8}{h(s)^4},
\]
and hence $\Sigma$ is minimal. 
\begin{remark}
For a geometric verification of the minimality of $\Sigma$, fix the ruling $t=t_0$. The isometry
\[
 R_{t_0}(z,w)=\bigl(e^{2it_0}\overline z,e^{4it_0}\overline w\bigr)
\]
satisfies $R_{t_0}f(s,t)=f(s,2t_0-t)$, fixes the ruling great circle pointwise, and reverses the normal there. Equivariance of the mean-curvature vector therefore both fixes and negates that vector at each point of the ruling, so it must vanish.  
\end{remark}
We next record an implicit description of $\Sigma$. The real and imaginary parts of the complex-valued homogeneous polynomial $z^2\overline w$ are harmonic on $\R^4$, and on the parametrisation \eqref{eq:f-link} one has
\[
    z^2\overline w = \bigl(\sin s\,e^{it}\bigr)^2
    \bigl(\cos s\,e^{-2it}\bigr) = \sin^2s\cos s\in\R_{\geq0}.
\]
Conversely, suppose that $(z,w)\in\Sph^3$ and $z^2\overline w\in\R_{\geq0}$. If $z,w\neq0$, write
\[
    z=|z|e^{i\alpha},\qquad w=|w|e^{i\beta}.
\]
Then
\[
    z^2\overline w = |z|^2|w|e^{i(2\alpha-\beta)}
    \in\R_{\geq0}
\]
implies $2\alpha-\beta\in2\pi\Z.$ Since $|z|^2+|w|^2=1$, there is an $s\in[0,\pi/2]$ such that
\[
    |z|=\sin s,\qquad |w|=\cos s,
\]
and, taking $t=\alpha$ modulo $2\pi$,
\[
    (z,w)=\bigl(\sin s\,e^{it},\cos s\,e^{2it}\bigr).
\]
The cases $z=0$ and $w=0$ are similar. Therefore,
\begin{equation}\label{eq:implicit-link}
    \Sigma = \{(z,w)\in\Sph^3:z^2\overline w\in\R_{\geq0}\}.
\end{equation}
Let $C=C(\Sigma)$ denote the cone over $\Sigma$. From
\eqref{eq:implicit-link},
\begin{equation}\label{eq:cone-implicit}
    C = C(\Sigma) = \{(z,w)\in\C^2:z^2\overline w\in\R_{\geq0}\}.
\end{equation}
The direct conical parametrisation is
\begin{equation}\label{eq:cone-rst}
    F(r,s,t) = r f(s,t) = \bigl(r\sin s\,e^{it},r\cos s\,e^{2it}\bigr),
\end{equation}
where $r\geq0$, $s\in[0,\pi/2]$, and $t\in\R/2\pi\Z$. Introducing
\[
    \rho:=r\sin s=|z|,
    \qquad
    \sigma:=r\cos s=|w|,
\]
we obtain the equivalent parametrisation
\begin{equation}\label{eq:cone}
    C
    =
    \{(\rho e^{it},\sigma e^{2it}):
      \rho,\sigma\geq0,\ t\in\R/2\pi\Z\}.
\end{equation}
Conversely, for $r=\sqrt{\rho^2+\sigma^2}>0$,
\[
    r=\sqrt{\rho^2+\sigma^2},
    \qquad
    \sin s=\frac{\rho}{r},
    \qquad
    \cos s=\frac{\sigma}{r}.
\]
Thus the pair $(\rho,\sigma)$ simply replaces the cone radius $r$ and the transverse coordinate $s$; in particular, $r^2=\rho^2+\sigma^2.$ The parametrisation \eqref{eq:cone} is one-to-one away from $\{z=0\}$, where $t$ and $t+\pi$ determine the same point. The case $r=0$ gives the vertex. The cone $C$ is three-dimensional, whereas its link $\Sigma$ and the plane $P:=\{w=0\}$ are two-dimensional. The boundary statement is an identity of locally finite flat chains modulo two:
\begin{equation}\label{eq:boundary-cone}
    \partial\bracket{C}=\bracket{P}.
\end{equation}
Indeed, near $P\setminus\{0\}$ the map
\[
 (z,\sigma)\longmapsto
 \left(z,\sigma\frac{z^2}{|z|^2}\right),\qquad \sigma\geq0,
\]
gives one smooth boundary sheet. Near $z=0$, $w\neq0$, choose a local half-angle $\theta(w)$ of $w$; the parametrisation
$(a,w)\mapsto(ae^{i\theta(w)},w)$, with signed $a\in\R$, shows that this is a smooth interior locus, with no extra boundary. Finally, cutting out $B_\varepsilon$ adds the chain $\bracket{\varepsilon\Sigma}$ of mass $O(\varepsilon^2)$. Letting $\varepsilon\downarrow0$ proves \eqref{eq:boundary-cone} also at the vertex. All these chains have locally finite mass.

\subsection{Flat chains modulo two}\label{subsec:flat-chains}

We use locally flat chains with coefficients in $\Z_2$; see \cite[Sections~4.2--4.3]{Federer1969}. Write $\mathcal F_{k,\mathrm{loc}}(U;\Z_2)$ for the space of local flat chains and $\mathcal R_{k,\mathrm{loc}}(U;\Z_2)$ for its rectifiable chains of locally finite mass. The same notation will be used on smooth Riemannian manifolds, using charts. We write $\M(T)$, $\partial T$, $\spt T$ and $\|T\|$ for mass, boundary, support and mass measure. Total mass may be infinite, as it is for $\bracket{C}$ and $\bracket{P}$.

Orientations play no role and multiplicities take values in $\{0,1\}$. A properly embedded smooth submanifold $S\subset U$, possibly nonorientable and with boundary, determines its (canonical) chain $\bracket{S}$ when $S$ and $\partial S$ have locally finite measure. Then
\[
 \M(\bracket{S})=\mathcal H^k(S),\qquad
 \partial\bracket{S}=\bracket{\partial S}.
\]
Addition corresponds to symmetric difference up to $\mathcal H^k$-null sets; in particular, $T+T=0$.

For a locally Lipschitz map $F:U\to V$, the pushforward $F_\#T$ is used when $T$ has compact support, or when $F$ is proper on $\spt T$ and has the local mass control needed for pushforwards. It commutes with boundary operator. When $F$ is globally Lipschitz,
\[
 \M(F_\#T)\leq\operatorname{Lip}(F)^k\M(T).
\]
For a rectifiable chain $T$ of locally finite mass, the restriction $T\mres E$ is defined for Borel sets $E$, but its boundary need not have locally finite mass for an arbitrary $E$.

If $T$ and $\partial T$ have locally finite mass and $u:U\to\R$ is Lipschitz, then for almost every $t$ the slice
\[
 \langle T,u,t\rangle :=\partial(T\mres\{u<t\})+(\partial T)\mres\{u<t\}
\]
is a rectifiable local flat $(k-1)$-chain supported in $\spt T\cap\{u=t\}$. For finite-mass $T$, the slicing inequality and the boundary identity are
\begin{equation}\label{eq:slicing-standard}
 \int_\R\M(\langle T,u,t\rangle)\dd t
 \leq\operatorname{Lip}(u)\M(T),\qquad
 \partial\langle T,u,t\rangle=\langle\partial T,u,t\rangle.
\end{equation}
Note that there is no need to add a sign in front of the second identity as we are working modulo two. The corresponding local statements follow by restriction to relatively compact sets. If $T=\bracket{S}$ is smooth and $u|_S$ is a submersion along the level, then for almost every such $t$,
\begin{equation}\label{eq:slice-chain}
 \langle\bracket{S},u,t\rangle=\bracket{S\cap\{u=t\}},
\end{equation}
with multiplicity one.

A chain $T\in\mathcal R_{k,\mathrm{loc}}(U;\Z_2)$ is \emph{locally mass-minimising modulo two} if, for every $W\Subset U$ and every compactly supported finite-mass rectifiable cycle $Q$ with $\spt Q\Subset W$, one has
\[
 \M(T\mres W)\leq\M((T+Q)\mres W).
\]
Competitors of infinite mass give a trivial inequality. We distinguish this all-cycle comparison from \emph{minimisation in a fixed mod-two homology class}, which permits perturbations $Q=\partial V$. The latter is the property used for the closed surfaces in Appendix~\ref{appendix:Ros-theorem}; an arbitrary compact cycle there need not be a boundary.

\section{The quotient-helicoid slices}\label{sec:helicoid}

We first recall the result of Ros used below. For $a>0$, let
\[
 H_a=\{(s\cos t,s\sin t,at):s,t\in\R\}\subset\R^3
\]
be the helicoid of pitch $a$, and let
\[
 \tau_a(x_1,x_2,x_3)=(x_1,x_2,x_3+\pi a).
\]
Since $(s,t)$ and $(-s,t+\pi)$ have images differing by $\pi a e_3$, $H_a$ descends to a properly embedded one-sided surface $\overline H_a\subset\R^3/\langle\tau_a\rangle,$ for which we have the following result. 

\begin{theorem}[Ros]\label{thm:Ros}
For every $a>0$, the nonorientable quotient helicoid $\overline H_a$ is area-minimising modulo two. Equivalently, every smooth compact subdomain of $\overline H_a$ minimises area among compact orientable or nonorientable surfaces with the same boundary.
\end{theorem}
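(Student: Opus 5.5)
The plan is a calibration argument on the orientable double cover, pushed down by a covering trick; this is the standard route for one-sided minimisers.

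\emph{Step 1: a unit normal field.} Lift to the covering $\R^3\to\R^3/\langle\tau_a\rangle$. The helicoid $H_a$ is the zero set of
\[
 \phi(x)=x_1\sin(x_3/a)-x_2\cos(x_3/a).
\]
Off the axis $\{x_1=x_2=0\}$, the complement of $H_a$ is foliated by the rotated helicoids $H_a^\theta=\{\phi_\theta=0\}$, where $\phi_\theta$ is obtained from $\phi$ by replacing $x_3/a$ with $x_3/a+\theta$. Each leaf is again minimal. The unit normals $X$ of this foliation give a divergence-free line field on $\R^3$ minus the axis; divergence-free because the leaves are minimal. This line field is invariant under $\tau_a$, since $\tau_a$ maps $H_a^\theta$ to $H_a^{\theta+\pi}=H_a^\theta$. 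However, $\tau_a$ reverses the orientation of $X$, which is exactly the one-sidedness of $\overline H_a$. The line field extends continuously across the axis.

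\emph{Step 2: the mod-two comparison.} Take a compact subdomain $D\subset\overline H_a$ and a competitor surface $S$ with $\partial S=\partial D$. For a line field $\ell$ define the $2$-form $\omega=\iota_X\mathrm{vol}$ up to sign. Its absolute value $|\omega|$ is a well-defined density, and on any surface it is bounded by the area. This is the mod-two form of the calibration inequality. Equality $|\omega|(D)=\mathcal H^2(D)$ holds because $D$ is tangent to the foliation.

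To compare $S$ with $D$ we need the mod-two Stokes step: the cycle $S+D$ bounds mod two a region $V$ in some compact portion of the quotient. This step depends on which competitors are allowed. Either:
\begin{enumerate}[(i)]
 \item competitors are taken in the same mod-two relative homology class, or
 \item we note that $H_2$ of the solid-torus-like quotient $\R^2\times\Sph^1$ with $\Z_2$ coefficients vanishes,
\end{enumerate}
so every compact cycle is a boundary.

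Then pass to the orientation double cover of the region $V$ on which $X$ lifts to a genuine vector field. On that cover, apply the divergence theorem to $X$, with $\operatorname{div}X=0$. This gives
\[
 \mathcal H^2(D)=\int_D|\langle X,\nu\rangle|\leq\int_S|\langle X,\nu\rangle|\leq\mathcal H^2(S).
\]

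\emph{Main obstacle.} The hard part is that the sign of $X$ flips under $\tau_a$, so the calibration is not a global form on the quotient. The mod-two divergence argument must therefore be done carefully.

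The first approach I would try is to unwrap along the $x_3$ direction. Cut the quotient along a slab $\{0\le x_3<\pi a\}$ and regard the problem as taking place in $\R^3/\langle\tau_a^2\rangle$. On that double cover $X$ is a genuine $\tau_a^2$-invariant field. Lifting $D$, $S$ and $V$ doubles all the masses consistently, and the flux identity there yields the inequality after dividing by two.

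A second possible gap is smoothness of $X$ along the axis, which needs checking. Alternatively, one can avoid the axis entirely by a capacity argument, since the axis is one-dimensional and has zero capacity for the flux computation.
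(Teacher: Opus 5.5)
\textbf{There is a genuine gap: the calibration step fails, and the double cover does not repair it.}

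First, the sign bookkeeping is wrong. The leaves are the level sets of $\Theta=\arg(x_1+ix_2)-x_3/a$, and
\[
X=\frac{\nabla\Theta}{|\nabla\Theta|}=\frac{a\,e_\alpha-r\,e_3}{\sqrt{a^2+r^2}}
\]
does not depend on $x_3$. So $X$ is already a genuine $\tau_a$-invariant unit field on $N_a$ minus the core circle, and $\tau_a$ does not reverse it. Its divergence vanishes, even distributionally across the core, because $\langle X,e_r\rangle\equiv0$.

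$X$ does \emph{not} extend continuously across the axis. Every leaf contains the axis, and at an axis point the tangent planes of the leaves run through all vertical planes. With $\nu=(a\sin t,-a\cos t,s)/\sqrt{a^2+s^2}$ the smooth normal of $H_a$, one computes $X=-\nu$ on $\{s>0\}$ and $X=\nu$ on $\{s<0\}$. This flip is unavoidable: a one-sided surface carries no continuous transverse field. Any candidate must therefore be singular along a curve of $D$ across which $\langle X,\nu\rangle$ changes sign. Your capacity remark handles the divergence there, but not this sign change.

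Second, the divergence theorem only controls \emph{signed} flux. From $S+D=\partial V$ you get
\[
\int_D\langle X,\nu_V\rangle=-\int_S\langle X,\nu_V\rangle,
\]
not a comparison of $\int_D|\langle X,\nu\rangle|$ with $\int_S|\langle X,\nu\rangle|$. On $D$ the sign of $\langle X,\nu_V\rangle$ changes across the core circle. It also changes across $S\cap D$: the side of $D$ on which $V$ lies is locally constant on $D\setminus S$, so $S$ must cut $D$ into two-sided pieces. Writing $D^\pm$ for the parts where $\langle X,\nu_V\rangle=\pm1$, you only obtain $\mathcal H^2(D^+)-\mathcal H^2(D^-)\le\mathcal H^2(S)$.

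This bound can be vacuous. Take $D=\overline H_a\cap\{|s|\le A\}$ and let $S$ be a small normal graph of a section of the nontrivial normal bundle vanishing on a transversal arc $\{t=t_0\}$. Then $D^\pm$ are the two halves $\{s\gtrless0\}$ of the cut strip. The isometry $s\mapsto-s$ of $ds^2+(a^2+s^2)\,dt^2$ gives $\mathcal H^2(D^+)=\mathcal H^2(D^-)$, so the flux identity says nothing about $\mathcal H^2(S)$.

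Lifting to $\R^3/\langle\tau_a^2\rangle$ changes nothing. There the lift $\tilde D$ is a two-sided annulus with $\int_{\tilde D}\langle X,\nu\rangle=0$. In fact the lifted helicoid is a complete, two-sided, nonflat minimal surface of quadratic area growth, hence unstable by the logarithmic cutoff argument, so it admits no calibration at all.

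Minimality of $\overline H_a$ is a genuinely one-sided stability phenomenon. This is why the paper, following Ros, argues by existence and classification rather than by calibration:
\begin{enumerate}[(a)]
\item minimise in the class $\alpha_1+\alpha_3$ of the compact flat manifolds $M_{A,B}$, which has no totally geodesic representative;
\item show that a nonflat component is one-sided with curvature bounds $2\le c_{A,B}\le C_0$;
\item pass to a multiplicity-one, locally minimising limit in $N$;
\item identify that limit with $\mathscr H_1$ via Ros's classification and the total curvature $-2\pi$.
\end{enumerate}
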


This is \cite[Theorem~13]{Ros2006}. For the sake of completeness, we have sketched in Appendix~\ref{appendix:Ros-theorem} Ros's proof, including the homological comparison that passes to the limit. For the main argument we use the following consequence of the smooth statement.

\begin{lemma}\label{lem:Ros-flat-chain}
Let $N_a=\R^3/\langle\tau_a\rangle$, and let $\overline H_a\subset N_a$ be the nonorientable quotient helicoid. Assume that every smooth compact subdomain $\Omega\Subset\overline H_a$ minimises area among all smooth compact surfaces, orientable or nonorientable, with the same boundary. Then, for every compactly supported rectifiable flat $2$-chain $S$ with coefficients in $\Z_2$ satisfying
\[
    \partial S=\partial\bracket{\Omega},
\]
one has
\[
    \M(S)\geq\M(\bracket{\Omega})=\mathcal H^2(\Omega).
\]
Consequently, $\bracket{\overline H_a}$ is locally mass-minimising modulo two.
\end{lemma}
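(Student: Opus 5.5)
We pass from smooth competitors to rectifiable flat chains with a calibration-type argument. The cleanest route avoids approximating arbitrary flat chains by smooth surfaces, which is delicate modulo two. We use the structure behind Ros's theorem: the existence of a suitable calibrating object in the orientable double cover. Since the lemma only assumes the smooth minimising property, we instead use a compactness and regularity argument. Fix $\Omega$ and a competitor $S$ with $\partial S=\partial\bracket{\Omega}$. Suppose, for contradiction, that $\M(S)<\mathcal H^2(\Omega)$.

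\emph{Step 1 (reduction to a minimiser).} Work in a fixed compact region $K\subset N_a$ that contains $\spt S\cup\Omega$ and is mean-convex; for instance take $K=\{\dist(\cdot,\text{axis})\leq R\}$ for $R$ large, which is a solid torus with convex boundary cylinder. Consider the class of rectifiable mod-two $2$-chains $S'$ with $\partial S'=\partial\bracket{\Omega}$ and $\spt S'\subset K$. By the nearest-point retraction onto $K$, which is $1$-Lipschitz, we may assume $S$ lies in this class. The compactness theorem for flat chains modulo two (Federer 4.2.17 and 4.2.26) gives a mass minimiser $S_0$ in the class, with $\M(S_0)\leq\M(S)<\mathcal H^2(\Omega)$. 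By the maximum principle against the convex cylinder, $S_0$ stays away from $\partial K$ except possibly at the boundary curve.

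\emph{Step 2 (regularity).} Since $2$-dimensional mod-two minimisers in a $3$-manifold are smooth embedded surfaces in the interior (Federer's codimension-one regularity: the singular set has dimension at most $n-8<0$), $S_0=\bracket{\Sigma_0}$ for a smooth compact embedded surface $\Sigma_0$, possibly nonorientable. Boundary regularity at the smooth curve $\partial\Omega$ is also needed. For mod-two minimisers, Allard's boundary regularity applies at points of density $1/2$. The difficulty is that the density could equal $3/2$, and this is the main obstacle. We expect to handle it as follows. The curve $\partial\Omega$ is smooth and the hypothesis is stated only for smooth surfaces. We therefore use interior smoothness and approximate $\Sigma_0$ away from an $\varepsilon$-neighbourhood of $\partial\Omega$. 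We then close it up by a thin collar connecting to $\partial\Omega$, whose area is $O(\varepsilon)$ by the monotonicity-type density bound at the boundary. This yields a smooth compact surface with boundary $\partial\Omega$ and area below $\mathcal H^2(\Omega)-\delta+O(\varepsilon)$. For $\varepsilon$ small this contradicts the smooth minimising hypothesis. In place of the collar one can use the deformation theorem, polyhedral approximation, and smoothing with mass control $\M(S)+\varepsilon$.

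\emph{Step 3 (local minimality).} Given $W\Subset N_a$ and a cycle $Q$ with $\spt Q\Subset W$, choose a smooth compact subdomain $\Omega\subset\overline H_a$ containing $\overline W\cap\overline H_a$ whose boundary lies outside $\overline W$. Generic level sets of $\dist$ to a point, intersected with the surface, give such domains. Then $S=\bracket{\Omega}+Q\mres\ldots$; more precisely, $S=\bracket{\Omega}+Q$ has $\partial S=\partial\bracket{\Omega}$. The first part gives $\M(\bracket{\Omega}+Q)\geq\M(\bracket{\Omega})$. Subtracting the common mass outside $W$ yields $\M(\bracket{\overline H_a}\mres W)\leq\M((\bracket{\overline H_a}+Q)\mres W)$.
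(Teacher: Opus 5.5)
\textbf{Gap in Step 2 (boundary regularity).} Your Steps 1 and 3 follow the paper's argument: retraction onto $K_R$, compactness and lower semicontinuity, regularity, then cancelling the common mass outside $W$. The maximum-principle remark in Step 1 is unnecessary. The $1$-Lipschitz retraction already shows that the minimiser over $K$ minimises among all compactly supported chains in $N_a$ with boundary $\partial\bracket{\Omega}$, so interior regularity applies everywhere.

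The gap is in Step 2. The hypothesis applies only to a smooth compact embedded surface whose boundary is exactly $\partial\Omega$, and your collar is not shown to produce one. The monotonicity bound controls only the \emph{area} of a swept chain joining $\gamma_\varepsilon=\Sigma_0\cap\{\dist(\cdot,\partial\Omega)=\varepsilon\}$ to $\partial\Omega$, for instance along nearest-point projection. Such a chain is a Lipschitz image and is in general neither embedded nor smooth.

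Worse, in exactly the case you worry about, the construction fails outright. If the minimiser looks like three sheets along $\partial\Omega$ (density $3/2$), then $\gamma_\varepsilon$ consists of three curves running around the tube. The collar then gives three sheets meeting along $\partial\Omega$, which is not a surface with boundary $\partial\Omega$. Repairing this needs further surgery: detach two sheets from $\partial\Omega$, join them into an annulus inside the tube, and round off the corners along $\gamma_\varepsilon$. You do not supply this. The fallback via the deformation theorem and polyhedral approximation has the same problem, because a mod-two polyhedral chain is a branched complex that must still be resolved into an embedded surface.

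\textbf{Missing idea.} Density $3/2$, or any boundary density above $1/2$, cannot occur for a two-dimensional mod-two \emph{minimiser}. The argument runs as follows.
\begin{enumerate}
\item Boundary monotonicity gives tangent cones at points of $\partial\Omega$. These are mod-two minimising cones in $\R^3$ whose boundary is a line.
\item By interior regularity, such a cone is a union of an odd number of half-planes bounded by that line.
\item If there are at least three half-planes, the angular gaps sum to $2\pi$, so two adjacent half-planes bound a wedge of opening angle less than $\pi$. Cutting off its corner decreases area, so the cone is not minimising.
\item Hence every boundary tangent cone is a single half-plane and the density is $1/2$. Allard's boundary regularity then makes the minimiser smooth up to $\partial\Omega$.
\end{enumerate}

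This is what the paper uses when it cites the boundary regularity theorem for mod-two minimisers: the boundary singular set has dimension at most $m-3$, which is empty for $m=2$. That yields $T_*=\bracket{\Sigma_*}$ with $\Sigma_*$ smooth, compact and embedded, and $\partial\Sigma_*=\partial\Omega$. The comparison $\mathcal H^2(\Omega)\le\mathcal H^2(\Sigma_*)=\M(T_*)\le\M(S)$ then follows directly, with no collar or smoothing construction needed.
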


\begin{proof}
Fix a smooth compact domain $\Omega\Subset\overline H_a$, set $\Gamma:=\partial\bracket{\Omega}$, and let $S$ be a compactly supported rectifiable chain modulo two with $\partial S=\Gamma$. We may assume $\M(S)<\infty$, otherwise the statement is trivial. Consider
\[
    m:=\inf\left\{\M(T):
    \begin{array}{l}
    T\in\mathcal R_{2,\mathrm{loc}}(N_a;\Z_2),\quad \M(T)<\infty,\\
    \spt T\text{ compact},\quad\partial T=\Gamma
    \end{array}\right\}.
\]
Since $S$ is admissible, $m\leq\M(S)$.

Writing $N_a\simeq\R^2\times(\R/\pi a\Z)$, choose $R$ large enough so that $\spt\Gamma\cup\operatorname{spt}S$ is contained in $K_R:=\overline{B_R^2}\times(\R/\pi a\Z)$. The nearest-point retraction
\[
    \Pi_R:N_a\rightarrow K_R
\]
is $1$-Lipschitz and fixes $\Gamma$. Hence the infimum defining $m$ may be taken over chains supported in $K_R$. A minimising sequence has uniformly bounded mass and the fixed (finite-mass) boundary $\Gamma$. Flat-chain compactness, rectifiability for finite coefficient groups, and lower semicontinuity therefore give a minimiser $T_*$; see \cite[Theorems~3.9 and~7.13]{WhiteChodosh2014} and \cite[Sections~4.2--4.3]{Federer1969}. Thus,
\[
    \partial T_*=\Gamma, \qquad \M(T_*)=m.
\]

The minimiser is locally mass-minimising in the whole flat manifold
$N_a$. Interior regularity in dimension two and codimension one gives no
interior singularities. Boundary regularity for a smooth prescribed
boundary gives a singular-set bound $m-3$, hence no boundary singularities
when $m=2$; see \cite[Theorems~9.13--9.14 and~10.3, p.~57]{WhiteChodosh2014}.
These statements apply in local Euclidean charts of $N_a$. Consequently,
\[
    T_*=\bracket{\Sigma_*}
\]
for a smooth compact embedded surface $\Sigma_*\subset N_a$, possibly
nonorientable, with $\partial\Sigma_*=\partial\Omega$. By the assumed
smooth minimising property of $\Omega$,
\[
    \M(\bracket{\Omega})
    =
    \mathcal H^2(\Omega)
    \leq
    \mathcal H^2(\Sigma_*)
    =
    \M(T_*)
    =
    m
    \leq
    \M(S).
\]
This proves the comparison. To obtain the local statement for all cycles, choose a smooth compact domain $\Omega\subset\overline H_a$ whose interior contains $\overline H_a\cap\spt Q$ for a given compactly supported cycle $Q$. Apply the comparison to $\bracket{\Omega}+Q$ in a relatively compact open set containing both supports, and cancel the identical mass outside $\Omega$. Conversely, the chain comparison applies to every smooth compact surface, whose (canonical) chain has mass equal to its area.
\end{proof}

Let now
\[
 p:\C^2\to[0,\infty),\qquad p(z,w)=|w|,
\]
and, for $\sigma>0$, set $N_\sigma=p^{-1}(\sigma)$. The map
\begin{equation}\label{eq:isometry-level}
 \Psi_\sigma:N_\sigma\rightarrow
 \R^2\times\bigl(\R/2\pi\sigma\Z\bigr),
 \qquad
 \Psi_\sigma(z,\sigma e^{i\beta})=(\Re z,\Im z,[\sigma\beta]),
\end{equation}
is a well-defined isometry for the metric induced from $\R^4$.

\begin{lemma}\label{lem:slice-helicoid}
For every $\sigma>0$, $C\cap N_\sigma$ is mapped by $\Psi_\sigma$ onto the quotient helicoid $\overline H_{2\sigma}$.
\end{lemma}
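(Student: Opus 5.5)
Throughout, $\overline H_{2\sigma}$ denotes the image of the helicoid $H_{2\sigma}$ in $\R^3/\langle\tau_{2\sigma}\rangle$, where $\tau_{2\sigma}$ is translation by $2\pi\sigma$ in the $x_3$-direction. This quotient is exactly $\R^2\times(\R/2\pi\sigma\Z)$, the target of $\Psi_\sigma$. The plan is to compute $\Psi_\sigma(C\cap N_\sigma)$ directly from the parametrisation \eqref{eq:cone}, and to match it with the image of the helicoid parametrisation under the quotient map.

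\emph{Step 1: describe $C\cap N_\sigma$.} By \eqref{eq:cone}, a point of $C$ has the form $(\rho e^{it},\sigma' e^{2it})$ with $\rho,\sigma'\geq0$. It lies in $N_\sigma$ exactly when $\sigma'=\sigma$. Hence
\[
 C\cap N_\sigma=\{(\rho e^{it},\sigma e^{2it}):\rho\geq0,\ t\in\R/2\pi\Z\}.
\]
Applying \eqref{eq:isometry-level} with $\beta=2t$ gives
\[
 \Psi_\sigma(C\cap N_\sigma)=\{(\rho\cos t,\rho\sin t,[2\sigma t]):\rho\geq0,\ t\in\R\}.
\]
Here we use that $[2\sigma t]$ depends only on $t$ modulo $\pi$, hence is well defined for $t\in\R/2\pi\Z$. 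Since $\beta$ is only defined modulo $2\pi$, the class $[\sigma\beta]$ is taken modulo $2\pi\sigma$, which is consistent.

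\emph{Step 2: compare with the helicoid.} The helicoid of pitch $a=2\sigma$ is $H_{2\sigma}=\{(s\cos t,s\sin t,2\sigma t):s,t\in\R\}$. Its image in the quotient is the set of classes $(s\cos t,s\sin t,[2\sigma t])$ with $s,t\in\R$.

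\begin{itemize}
\item Points with $s\geq0$ are exactly those found in Step 1, taking $\rho=s$.
\item Points with $s<0$ are handled by the identity already used in defining $\overline H_a$. The pair $(s,t)$ and the pair $(-s,t+\pi)$ give points differing by $\pi a e_3=2\pi\sigma e_3$, which vanishes in the quotient. So $(s,t)$ with $s<0$ yields the same class as $(-s,t+\pi)$, which has $-s>0$.
\end{itemize}

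The two sets therefore coincide, which proves the lemma.

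\emph{Main obstacle.} The only delicate point is the bookkeeping of periods. The first issue is checking that $\Psi_\sigma$ uses the circle of length $2\pi\sigma$, while the helicoid quotient uses translation by $\pi a=2\pi\sigma$; these agree. The second issue is that the angle $\beta=2t$ in $w$ doubles $t$; this is exactly what produces pitch $2\sigma$ rather than $\sigma$.

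I would also remark that the correspondence is a bijection onto $\overline H_{2\sigma}$, not merely an equality of sets. Away from $z=0$ the parametrisation of $C\cap N_\sigma$ is injective. The axis $z=0$ corresponds to the circle $\{0\}\times\R/2\pi\sigma\Z$, covered twice by $t$ and $t+\pi$, exactly as the axis of the quotient helicoid is. Finally, since $\Psi_\sigma$ is an isometry, this identifies $C\cap N_\sigma$ isometrically with $\overline H_{2\sigma}$, which is the form needed to invoke \Cref{lem:Ros-flat-chain} later.
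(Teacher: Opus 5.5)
Your proof is correct and takes the same approach as the paper: substitute the parametrisation \eqref{eq:cone} into $\Psi_\sigma$ and compare the result with the quotient of $H_{2\sigma}$ by the translation of length $\pi(2\sigma)=2\pi\sigma$. The only difference is bookkeeping. You keep $\rho\geq0$ and recover the $s<0$ half via the identification $(s,t)\sim(-s,t+\pi)$, whereas the paper writes the slice directly with signed $s\in\R$, so the helicoid image can be read off at once.
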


\begin{proof}
By \eqref{eq:cone},
\[
 C\cap N_\sigma =\{(se^{it},\sigma e^{2it}):s,t\in\R\}.
\]
Under \eqref{eq:isometry-level}, this becomes
\[
 \{(s\cos t,s\sin t,[2\sigma t]):s,t\in\R\},
\]
which is the image of $H_{2\sigma}$ in the quotient by the translation of length $2\pi\sigma=\pi(2\sigma)$.
\end{proof}

Fix $R>0$ and set
\[
 \mathbf C_R:=\bracket{C}\mres B_R.
\]
Its boundary is
\begin{equation}\label{eq:full-boundary}
 \partial\mathbf C_R =\bracket{P\cap B_R}+\bracket{R\Sigma}.
\end{equation}
The boundaries of the two displayed pieces cancel on the common circle $P\cap\partial B_R$. For $0<\sigma<R$, let
\[
 A_{\sigma,R}:=\sqrt{R^2-\sigma^2},
 \qquad
 C_{\sigma,R}:=C\cap\overline{B_R}\cap N_\sigma.
\]
Under $\Psi_\sigma$, the set $C_{\sigma,R}$ is a smooth compact subdomain of $\overline H_{2\sigma}$, namely the portion $|z|\leq A_{\sigma,R}$. Its boundary is the single closed curve
\begin{equation}\label{eq:gamma-sigma}
 \Gamma_{\sigma,R} =\{(A_{\sigma,R}e^{it},\sigma e^{2it}):t\in\R/2\pi\Z\}.
\end{equation}
Consequently, by \Cref{lem:Ros-flat-chain} and the isometry
$\Psi_\sigma$,
\begin{equation}\label{eq:slice-minimises}
 \M(S)\ge\mathcal H^2(C_{\sigma,R})
\end{equation}
for every compactly supported rectifiable flat $2$-chain $S$ modulo two in the slice $N_\sigma$ satisfying $\partial S=\bracket{\Gamma_{\sigma,R}}$.

\section{Mass-minimisation of the cone}\label{sec:proof}
We are now able to state and prove our main result. 
\begin{theorem}\label{thm:main}
The chain $\bracket{C}$ associated with the cone in \eqref{eq:cone} is locally mass-minimising modulo two. More precisely, for every $R>0$ and every compactly supported rectifiable flat $3$-chain $T$ modulo two such that
\[
 \partial T=\partial\mathbf C_R,
\]
one has
\[
 \M(T)\ge\M(\mathbf C_R).
\]
\end{theorem}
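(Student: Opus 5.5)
We slice both $T$ and $\mathbf C_R$ by the boundary-distance function $p(z,w)=|w|$, which is $1$-Lipschitz on $\C^2$. The plan is to compare the two chains level by level using the slice minimality \eqref{eq:slice-minimises}, and then integrate over $\sigma\in(0,R)$.

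\emph{The competitor side.} We may assume $\M(T)<\infty$. Since $\partial T=\partial\mathbf C_R$ has finite mass, the slicing theory of \Cref{subsec:flat-chains} applies. For almost every $\sigma$ the slice $T_\sigma:=\langle T,p,\sigma\rangle$ is a compactly supported rectifiable $2$-chain supported in $N_\sigma$, and \eqref{eq:slicing-standard} gives
\[
 \int_0^R\M(T_\sigma)\dd\sigma\le\int_\R\M(T_\sigma)\dd\sigma\le\M(T).
\]
Its boundary is $\partial T_\sigma=\langle\partial T,p,\sigma\rangle=\langle\partial\mathbf C_R,p,\sigma\rangle$. We compute this using \eqref{eq:full-boundary}. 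The piece $\bracket{P\cap B_R}$ lies in $\{p=0\}$, so for $\sigma>0$ it contributes nothing. The piece $\bracket{R\Sigma}$ is smooth, and for $0<\sigma<R$ the function $p$ restricted to it is a submersion along the level set, because on $R\Sigma$ we have $|w|=R\cos s$ with $s\in(0,\pi/2)$ there. So by \eqref{eq:slice-chain} its slice is $\bracket{R\Sigma\cap N_\sigma}=\bracket{\Gamma_{\sigma,R}}$, the curve in \eqref{eq:gamma-sigma}. For $\sigma>R$ the slice is zero.

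\emph{Applying Ros's minimality on each slice.} The chain $T_\sigma$ lives in $N_\sigma$. We view it as a chain in the flat $3$-manifold $N_\sigma$; this identification is legitimate because $N_\sigma$ is a smooth embedded hypersurface and the induced metric is the one used in $\Psi_\sigma$. Then \eqref{eq:slice-minimises} gives $\M(T_\sigma)\ge\mathcal H^2(C_{\sigma,R})$ for almost every $\sigma\in(0,R)$.

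\emph{The cone side.} On $C$, the gradient $\nabla_Cp$ has norm $1$. To see this, use the parametrisation \eqref{eq:cone}, $(\rho e^{it},\sigma e^{2it})$. There $\partial_\rho$ is a unit vector tangent to $C$, and it is orthogonal to $\partial_\sigma$ and to $\partial_t$ up to a component along $N_\sigma$. More cleanly, $\partial_\sigma$ has a unit component normal to $N_\sigma$ inside $C$, namely $(0,e^{2it})$, which is tangent to $C$ since we may vary $\sigma$ with $\rho,t$ fixed. So the coarea formula on $C\cap B_R$ gives
\[
 \M(\mathbf C_R)=\mathcal H^3(C\cap B_R)=\int_0^R\mathcal H^2(C_{\sigma,R})\dd\sigma.
\]
Combining the three steps yields $\M(T)\ge\M(\mathbf C_R)$.

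\emph{Main obstacle.} The delicate point is justifying $\partial T_\sigma=\bracket{\Gamma_{\sigma,R}}$ as a chain in $N_\sigma$ for almost every $\sigma$. This requires checking that the commutation $\partial\langle T,p,\sigma\rangle=\langle\partial T,p,\sigma\rangle$ holds with the $\sigma=0$ piece $\bracket{P\cap B_R}$ genuinely invisible. That piece sits in $\{p\le 0\}$, so restricting to $\{p<\sigma\}$ and using the definition of the slice handles it. A second point is that the slice, a flat chain in $\R^4$ supported in $N_\sigma$, can be regarded as a flat chain in $N_\sigma$. I would handle this by a Lipschitz nearest-point retraction onto $N_\sigma$ defined near $N_\sigma$, which fixes $T_\sigma$, combined with the fact that flat chains of finite mass supported in a submanifold are pushforwards from it.
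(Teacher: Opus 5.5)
Your proposal is correct and follows the paper's proof. You slice by $p=|w|$, apply the quotient-helicoid comparison \eqref{eq:slice-minimises} on almost every level, and use that $|\nabla_C p|=1$ makes coarea an equality for the cone. The paper instead checks this last identity through the explicit parametrisation $F(\rho,\sigma,t)$ and Fubini, and obtains $\partial T_\sigma$ as $\partial\mathbf C_{R,\sigma}$ rather than by slicing $\bracket{P\cap B_R}+\bracket{R\Sigma}$ directly; these differences are cosmetic. The one step you leave out is the short deduction of local minimality from the comparison: apply it to $T=\mathbf C_R+Q$ with $\overline W\subset B_R$, then cancel the common mass outside $W$.
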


\begin{proof}
If $\M(T)=\infty$, the inequality is immediate. Assume $\M(T)<\infty$; its prescribed boundary \eqref{eq:full-boundary} also has finite mass. For almost every $\sigma>0$, define
\[
 T_\sigma:=\langle T,p,\sigma\rangle,
 \qquad
 \mathbf C_{R,\sigma}:=\langle\mathbf C_R,p,\sigma\rangle.
\]
By \eqref{eq:slicing-standard} and \eqref{eq:slice-chain},
\[
 \partial T_\sigma
 =\langle\partial T,p,\sigma\rangle
 =\langle\partial\mathbf C_R,p,\sigma\rangle
 =\partial\mathbf C_{R,\sigma}.
\]
For almost every $\sigma>0$, the rectifiable slice $T_\sigma$ is supported in the smooth manifold $N_\sigma$. Its approximate tangent planes lie in $TN_\sigma$, so it defines a rectifiable chain in $N_\sigma$ with the same mass and boundary. The same holds for $\mathbf C_{R,\sigma}$, and $\Psi_\sigma$ preserves both masses. For almost every $0<\sigma<R$, $\mathbf C_{R,\sigma}=\bracket{C_{\sigma,R}}$. Thus \Cref{lem:Ros-flat-chain}, in the form \eqref{eq:slice-minimises}, gives
\begin{equation}\label{eq:pointwise-slice}
 \M(T_\sigma)\ge\M(\mathbf C_{R,\sigma})
 \qquad\text{for a.e. }\sigma\in(0,R).
\end{equation}
No support condition $\spt T\subset\overline{B_R}$ is required: levels above $R$ only contribute nonnegative mass. The $0$ and $R$ level sets have zero measure. Since $p$ is $1$-Lipschitz, slicing gives
\begin{equation}\label{eq:competitor-coarea}
 \M(T)\ge\int_0^\infty\M(T_\sigma)\dd\sigma
 \ge\int_0^R\M(\mathbf C_{R,\sigma})\dd\sigma.
\end{equation}

It remains to identify the last integral. Away from a set of $\mathcal H^3$-measure zero, $C$ is parametrised by
\[
 F(\rho,\sigma,t)=(\rho e^{it},\sigma e^{2it}),
 \qquad \rho,\sigma>0,
\]
and
\[
 F_\rho=(e^{it},0),\qquad
 F_\sigma=(0,e^{2it}),\qquad
 F_t=(i\rho e^{it},2i\sigma e^{2it}).
\]
These vectors are mutually orthogonal and have lengths $1$, $1$, and $\sqrt{\rho^2+4\sigma^2}$, respectively. Thus
\begin{equation}\label{eq:cone-mass}
 \M(\mathbf C_R)
 =2\pi\int_0^R\int_0^{\sqrt{R^2-\sigma^2}}
 \sqrt{\rho^2+4\sigma^2}\dd\rho\dd\sigma.
\end{equation}
For a.e. $\sigma\in(0,R)$, the same parametrisation with $\sigma$ fixed gives
\begin{equation}\label{eq:slice-mass}
 \M(\mathbf C_{R,\sigma})
 =2\pi\int_0^{\sqrt{R^2-\sigma^2}}
 \sqrt{\rho^2+4\sigma^2}\dd\rho.
\end{equation}
Hence Fubini's theorem gives
\[
 \M(\mathbf C_R)=\int_0^R\M(\mathbf C_{R,\sigma})\dd\sigma.
\]
Combining this identity with \eqref{eq:competitor-coarea} proves the mass comparison when restricting to a ball of radius $R$. 

For local minimisation, let $Q$ be a finite-mass rectifiable cycle with $\spt Q\Subset W\Subset\R^4$. Choose $R$ with $\overline W\subset B_R$ and apply the comparison to $T=\mathbf C_R+Q$. On $\R^4\setminus W$ the two chains agree. Cancelling their identical mass there yields
\[
 \M(\bracket{C}\mres W)
 \leq\M((\bracket{C}+Q)\mres W),
\]
as required.
\end{proof}

\begin{remark}\label{rem:why-slicing}
The key feature of the function $p(z,w)=|w|$ is that its tangential gradient has unit length almost everywhere on $C$. Equivalently, the coarea inequality is an equality for the cone. At the same time, every positive-level slice is a locally mass-minimising chain modulo two by \Cref{lem:Ros-flat-chain}.
\end{remark}

\section{Characterisation by minimal cylindrical slices}
\label{sec:characterisation}

In this section only, we extend the notation $P$, $p$ and $N_\sigma$ from the preceding sections to higher ambient dimensions. For $n\geq4$, write
\[
 \R^n=\R^{n-2}_x\times\C_w,
 \qquad P=\{w=0\},
 \qquad p(x,w)=|w|,
\]
and set $N_\sigma=p^{-1}(\sigma)$ for $\sigma>0$. Thus, $\sigma$ still denotes distance to the boundary plane, and $N_\sigma$ carries its induced flat product metric. The symbols $C=C(\Sigma)$ and $\Sigma$ retain their original four-dimensional meanings.

A point of $P$ is a \emph{regular boundary point} of a cone $K$ if, in a neighbourhood of that point, $K$ is a smooth embedded hypersurface with boundary $P$. The following is our second main result. 

\begin{theorem}\label{thm:characterisation}
Let $T$ be a rectifiable flat $(n-1)$-chain modulo two of locally finite mass, with conical support $K$ and $\partial T=\bracket{P}$. Suppose that $K\setminus P$ is a smooth embedded minimal hypersurface and that $K$ has a regular boundary point. If, for some $\sigma>0$, the entire slice $K\cap N_\sigma$ is connected and minimal in $N_\sigma$, then either $K$ is a half-hyperplane with boundary $P$ or, up to an orthogonal transformation preserving $P$,
\[
 K=C(\Sigma)\times\R^{n-4}.
\]
Conversely, both types of cones satisfy the hypotheses and every positive cylindrical slice is minimal.
\end{theorem}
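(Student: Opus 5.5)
We would argue in the order suggested in the organisation section: orthogonality to the cylinders, then a structural description of the slice, then a rigidity step driven by the regular boundary point. Throughout, let $\nu$ be a local unit normal to $K\setminus P$, and work on a connected component of the smooth part.

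\textbf{Step 1: the cone meets $N_\sigma$ orthogonally.}
Write the second fundamental form of $K$ in terms of that of the slice $S_\sigma=K\cap N_\sigma$ inside $N_\sigma$. Let $\partial_\sigma$ be the radial unit field in the $w$-plane, and let $\theta$ be the angle between $\nu$ and $\partial_\sigma$. The mean curvature of $K$ splits into three pieces:
\begin{enumerate}[(i)]
\item the mean curvature of $S_\sigma$ in $N_\sigma$, weighted by a power of $\sin\theta$;
\item a term from the curvature of $N_\sigma$, whose only nonzero principal curvature is $1/\sigma$, in the $w$-circle direction;
\item a term $A^K(e,e)$ along the direction $e$ of $K$ transverse to $S_\sigma$.
\end{enumerate}
Minimality of $K$ and of $S_\sigma$ (and hence of every $S_{\sigma'}$, by homothety) gives one identity. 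Since $K$ is a cone, the radial position vector $X$ is tangent and $A(X,\cdot)=0$. Combining these, I expect to show $\langle\nu,\partial_\sigma\rangle\equiv0$.

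The first thing I would try is to differentiate $\langle\nu,\partial_\sigma\rangle$ along $S_\sigma$ and use connectedness together with the regular boundary point. Near $P$, $K$ is a smooth half-hypersurface with boundary $P$, so it meets $N_\sigma$ nearly orthogonally as $\sigma\to0$. Conicality then transports this information to all $\sigma$. This step is the main obstacle: the analytic identity need not force orthogonality pointwise. It may instead require a maximum-principle or unique-continuation argument applied to the function $\langle\nu,\partial_\sigma\rangle$, which satisfies an elliptic (Jacobi-type) equation on $K$.

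\textbf{Step 2: structure of the slice.}
Once orthogonality holds, $\partial_\sigma$ is tangent to $K$. Hence $K$ is invariant under the flow $(x,w)\mapsto(x,w+t\,w/|w|)$. Combined with invariance under dilations, this makes $K$ invariant under $(x,w)\mapsto(\lambda x,w)$ as well, so every slice $S_\sigma$ is a cone in the $x$-directions of the flat cylinder $\R^{n-2}\times(\R/2\pi\sigma\Z)$.

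Lift $S_\sigma$ to the covering space $\R^{n-2}\times\R$. The lift is a minimal hypersurface that is conical in $x$ for each fixed height $u=\sigma\beta$. Each height section $\{u=c\}$ is then a minimal cone in $\R^{n-2}$, and it should be a hyperplane. To see this, note that the slice is ruled by these sections. A minimal ruled hypersurface whose rulings are $(n-3)$-planes through the axis $\{x=0\}\times\R$ is a generalised helicoid: for codimension-one rulings, minimality forces the normal $n(u)\in\Sph^{n-3}$ to satisfy $n''\parallel n$ with $|n'|$ constant. In that case $n(u)$ traverses a great circle at constant speed $\omega$.

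I would establish the hyperplane property first from smoothness of $K\setminus P$ at points with $x=0$, $w\ne0$, where the section must be smooth through the origin. The helicoid description then follows from a direct mean-curvature computation, as in Lemma~\ref{lem:slice-helicoid}.

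\textbf{Step 3: the rigidity count.}
Periodicity under $u\mapsto u+2\pi\sigma$, with hyperplanes taken unoriented, forces $2\pi\sigma\omega\in\pi\Z$. Hence the normal performs $k$ half-turns, $k\ge0$. Scaling invariance makes $k$ independent of $\sigma$.

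The case $k=0$ gives $K=\{\langle n,x\rangle=0\}\times\{w\}$, whose boundary is not $P$. The correct reading of this case is as a hyperplane through $P$; combined with $\partial T=\bracket{P}$ and the boundary structure, it yields the half-hyperplane.

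For $k\ge1$, rotate so that the circle of normals lies in a coordinate $2$-plane of $\R^{n-2}$. Then $K=\{(\rho e^{ikt}\!\cdot,\ \sigma e^{2it})\}\times\R^{n-4}$, roughly $z^{2}\overline w^{\,k}\in\R_{\ge0}$. Near $\sigma\to0$, the $k$ half-turns produce $k$ sheets meeting at $P$. A regular boundary point requires a single smooth sheet with multiplicity one, since $\partial T=\bracket{P}$ mod two. This forces $k=1$, which is $C(\Sigma)\times\R^{n-4}$.

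\textbf{Converse.} For $C(\Sigma)$, Lemma~\ref{lem:slice-helicoid} shows that the slices are quotient helicoids, hence minimal and connected. Products with $\R^{n-4}$ preserve all the hypotheses. For a half-hyperplane, each slice is a flat half-plane times a line, which is connected and totally geodesic in $N_\sigma$.
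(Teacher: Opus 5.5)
There is a genuine gap, and it is the step you flag yourself: Step~1 (orthogonality) is never proved, and the route you suggest would not close it. Since the position vector is tangent to the cone, $\langle\nu_K,\partial_\sigma\rangle=-\langle\nu_K,(x,0)\rangle/|w|$. Neither $(0,w)$ nor $(x,0)$ is a Killing field, so this function has no Jacobi (or other elliptic) equation for a maximum principle or unique continuation to act on. The regular boundary point does give $\langle\nu_K,\partial_\sigma\rangle\to0$ near that point. After rescaling to $S=K\cap N_1$, however, this is only asymptotic information as $|x|\to\infty$ in a cone of directions, and dilation invariance of $K$ does not spread it over the slice.

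The missing idea is an exact identity, obtained by tracing the second fundamental form of $K$ in the conical parametrisation $G(\lambda,\xi)=\lambda\xi$, $\xi\in S$. Let $X=(x,0)$, let $Y=X^\top$ be its tangential part on $S$, let $\nu_S$ be the unit normal of $S$ in $N_1$, and set $u=\langle X,\nu_S\rangle$. With compatible normals,
\[
\sqrt{1+u^2}\,H_K=H_S+u-\frac{\langle Y,\nabla_S u\rangle}{1+u^2}.
\]
When both mean curvatures vanish, this is a first-order transport equation, not an elliptic one. The function $b=u^2/(1+u^2)$, which is exactly your $\cos^2\theta=\langle\nu_K,\partial_\sigma\rangle^2$ on $N_1$, satisfies $Yb=2b$. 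The flow of $Y$ is complete on the proper hypersurface $S$, because $\tfrac{d}{dt}|x|^2=2|Y|^2\le2|x|^2$. Hence $b(\gamma(t))=e^{2t}b(\gamma(0))$ along every integral curve, and the a priori bound $0\le b<1$ forces $b\equiv0$. Orthogonality therefore needs neither the regular boundary point nor any maximum principle. Without this identity or a substitute, your Steps~2--3 have nothing to stand on.

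There is also a smaller error in the degenerate cases. Your $k=0$ case is the full hyperplane $E\times\C$, which meets $P$ only in the $(n-3)$-plane $E\times\{0\}$. It is not a hyperplane through $P$ and cannot be reread as the half-hyperplane; it must be excluded. By the constancy theorem, $T$ would be the canonical chain of a hyperplane, so $\partial T=0\neq\bracket{P}$.

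The half-hyperplane comes instead from an alternative that your Step~2 silently discards. An axis point $(0,e^{i\beta})$ lies in $S$ for every angle occurring in $S$, by closedness and invariance under $x$-contraction. There the tangent space of $S$ is invariant under $(v,t)\mapsto(\lambda v,t)$, so it is either $E\oplus\R\partial_\beta$ or horizontal. In the horizontal case, the local graph $\beta=h(x)$ is $0$-homogeneous and continuous, hence constant, and connectedness gives $S=\R^{n-2}\times\{e^{i\beta_0}\}$. Only in the non-horizontal case does your ruled description apply. There, the claim that ``each section is a minimal cone'' is unjustified and should be replaced by your fallback: $1$-homogeneity plus differentiability at $x=0$ makes the section linear.

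From that point on, your argument agrees with the paper: constant-speed rotation of the normals, $2\pi\kappa\in\pi\Z$, and the $k$-sheet count at a regular boundary point with $z\neq0$ forcing $k=1$.
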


\begin{proof}
If $K$ is a half-hyperplane, the conclusion is immediate. Henceforth, assume that $K$ is not a half-hyperplane. Homotheties centred at the origin identify all positive cylindrical slices, so we work with $S=K\cap N_1$. By the hypotheses, $S$ is a connected, smooth, properly embedded hypersurface without boundary. Every point of $K\setminus P$ is a positive multiple of a unique point of $S$. We now divide the proof in five steps. 

\smallskip\noindent
\emph{Step 1. The cone meets the cylinders orthogonally.}
On $N_1$, define the vector field $X(x,w)=(x,0)$, and let
$Y=X^\top$ denote its tangential projection onto $S$. For a local unit normal $\nu$ to $S$ in $N_1$, set $u=\langle X,\nu\rangle$ and $q=\sqrt{1+u^2}$. The corresponding unit normal to the cone along $S$ is $(\nu-u(0,w))/q$. With these choices of normal, write $H_K$ for the scalar mean curvature of $K\setminus P$ in $\R^n$, and $H_S$ for that of $S$ as a hypersurface of $N_1$ with its induced metric. Parametrise $K\setminus P$ by
\[
 G:(0,\infty)\times S \rightarrow K\setminus P,
 \qquad G(\lambda,\xi)=\lambda\xi.
\]
Here $\xi\in S$ and $\lambda>0$ is the dilation factor about the origin; in particular, $G(1,\xi)=\xi$. Tracing the second fundamental form in this parametrisation at $\lambda=1$ gives
\begin{equation}\label{eq:characterisation-mean-curvature}
 qH_K=H_S+u-\frac{\langle Y,\nabla_S u\rangle}{1+u^2},
\end{equation}
Here $\nabla_S$ is the gradient on $S$, so that $\langle Y,\nabla_S u\rangle$ is the directional derivative of $u$ along $Y$. Both mean curvatures are traces, with the convention $A=-d\nu$. Indeed, writing $\eta=\langle\nu,(0,iw)\rangle$, the trace is $H_S+u(1-\eta^2)+(A_S(Y,Y)+u^3\eta^2)/(1+u^2)$, and $\langle Y,\nabla_S u\rangle=u\eta^2-A_S(Y,Y)$ gives the displayed identity.

Both mean curvatures vanish. Thus, the globally defined function
$b=u^2/(1+u^2)$ satisfies $\langle Y,\nabla_S b\rangle=2b$ and $0\leq b<1$. The flow of $Y$ is complete: along an integral curve
$\gamma(t)=(x(t),w(t))$, we have $\gamma'=Y$, so that 
\[
 \frac{d}{dt}|x|^2=2\langle x,x'\rangle =2\langle X,Y\rangle=2|Y|^2\leq2|x|^2.
\]
Here $\langle X,Y\rangle=|Y|^2$ because $Y=X^\top$, so $X-Y$ is orthogonal to $Y$. Thus $|x|$ stays bounded on finite time intervals, and properness of $S$ prevents escape. Since $\langle Y,\nabla_S b\rangle=2b$, on the same curve we have $b(\gamma(t))=e^{2t}b(\gamma(0))$. Completeness and the bound $0\leq b<1$ therefore force $b=0$. Hence $(x,0)$ is tangent to $S$; subtracting it from the cone's radial vector shows that $(0,w)$ is tangent to $K$. The cone therefore meets $N_\sigma$ orthogonally. The complete flow of $X$ also gives invariance under $(x,w)\mapsto(\lambda x,w)$; combining this with homotheties centred at the origin gives invariance under $(x,w)\mapsto(x,\lambda w)$.

\smallskip\noindent
\emph{Step 2. Sections by translates of $P$ are planes.}
Contraction in the $x$-factor sends each $(x,e^{i\beta})\in S$ to
$(0,e^{i\beta})\in S$. At such an axis point, the tangent space is invariant under $(v,t)\mapsto(\lambda v,t)$. It is therefore either horizontal or of the form $E\oplus\R\partial_\beta$, where
$E\subset\R^{n-2}$ has dimension $n-3$.

In the horizontal case, a local graph $\beta=h(x)$ satisfies $h(\lambda x)=h(x)$, so continuity at zero makes it constant. Dilation and connectedness then give $S=\R^{n-2}\times\{e^{i\beta_0}\}$, making $K$ a half-hyperplane, contrary to our assumption. Thus, at every axis point, we can write a local graph as $\zeta=h(\xi,\beta)$, with $\xi\in E$. Dilation invariance and differentiability at zero give
\[
 h(\lambda\xi,\beta)=\lambda h(\xi,\beta),
 \qquad h(\xi,\beta)=D_\xi h(0,\beta)\xi.
\]
Thus, nearby axis points belong to $S$, and the local sections are linear hyperplanes. The set of angles present on the axis is nonempty, open and closed, hence is the whole circle. Dilation extends each local section to its entire hyperplane. Consequently
$S=\{(x,e^{i\beta}):x\in E_\beta\}$ for a smooth periodic family of linear hyperplanes $E_\beta\subset\R^{n-2}$. In particular,
$K\cap\{w=\sigma e^{i\beta}\}=E_\beta\times\{\sigma e^{i\beta}\}$
is an affine $(n-3)$-plane in the corresponding translate of $P$.

\smallskip\noindent
\emph{Step 3. The planes rotate at constant speed.}
Unwrap the cylinder to its Euclidean covering space $\R^{n-2}\times\R$. Choose a smooth unit normal $\nu_0(\beta)$ to $E_\beta$ on this cover. The lifted slice is the level set $f(x,\beta)=\langle x,\nu_0(\beta)\rangle=0$, whose mean curvature is
\[
 H_S=-\frac{\langle x,\nu_0''(\beta)\rangle}
 {\bigl(1+\langle x,\nu_0'(\beta)\rangle^2\bigr)^{3/2}}.
\]
Since this vanishes for every $x\perp\nu_0$, we have
\[
 \nu_0''=-|\nu_0'|^2\nu_0,
 \qquad (|\nu_0'|^2)'=2\langle\nu_0',\nu_0''\rangle=0.
\]
The constant angular speed $\kappa:=|\nu_0'|$ is positive: otherwise $K$ would be an entire hyperplane. By the constancy theorem, $T$ is the canonical mod-two chain carried by $K$, so its boundary would be zero, contrary to $\partial T=\bracket{P}$. Thus $\nu_0''=-\kappa^2\nu_0$, and the normals rotate in the fixed two-plane $Q=\operatorname{span}\{\nu_0(0),\nu_0'(0)\}$. The common subspace
\[
 V=\bigcap_\beta E_\beta=Q^\perp\subset\R^{n-2}
\]
has dimension $n-4$, and $K$ is invariant under translations in $V$.

\smallskip\noindent
\emph{Step 4. There is exactly one half-turn.}
Periodicity of the unoriented planes gives $2\pi\kappa=k\pi$ for some integer $k\geq1$. Choose coordinates $x=(z,y)\in\C\times V$ so that
\[
 E_\beta=\{(z,y):\Imm(ze^{-ik\beta/2})=0\}.
\]
Regular boundary points form a nonempty relatively open subset of $P$, so choose one with $z\neq0$. Near it, write $z=|z|e^{i\alpha(z)}$. The cone consists of the $k$ distinct half-sheets
\[
 w=\sigma\exp\!\left(\frac{2i\alpha(z)+2\pi i j}{k}\right),
 \qquad \sigma\geq0,\quad j=0,\ldots,k-1.
\]
A regular boundary point has exactly one boundary sheet. Hence $k=1$.

\smallskip\noindent
\emph{Step 5. Identification of the cone.}
We have
\[
 K\setminus P =\{(s e^{i\beta/2},y,\sigma e^{i\beta}):
       s\in\R,\ y\in V,\ \sigma>0,\ \beta\in\R\}.
\]
Equivalently, $z^2\overline w=\sigma s^2\geq0$. Taking the closure gives $K=C(\Sigma)\times V$.

Conversely, a half-hyperplane with boundary $P$ satisfies the hypotheses and has totally geodesic cylindrical slices. The product $C(\Sigma)\times\R^{n-4}$ is minimal away from $P$, has mod-two boundary $P$, and is regular at every boundary point with
$z\neq0$. Its cylindrical slices are connected and have the form in Step~3 with $\nu_0(\beta)=(-\sin(\beta/2),\cos(\beta/2),0,\ldots,0)$; the displayed mean-curvature formula shows that they are minimal.
\end{proof}

\appendix
\numberwithin{equation}{section}

\section{The cone over the Clifford torus double covers $C(\Sigma)$}
\label{appendix:double-cover}

Although not needed for the proof of the main theorems, we record an interesting feature of the cone $C$. Let
\[
 X=\R^4\setminus P=\C\times\C^*.
\]
Since $\pi_1(X)=\Z$, there is a unique connected two-sheeted cover up to equivalence; the disconnected trivial cover is a different possibility. A representative of the connected cover, with domain again identified with $X$, is
\[
 \PhiMap:X \rightarrow X,\qquad \PhiMap(z,w)=(z,w^2).
\]
For $w=x+iy$, its real-coordinate expression is
$\PhiMap(z,(x,y))=(z,(x^2-y^2,2xy))$.

The metric making this map a local isometry onto the Euclidean base is
\begin{equation}\label{eq:cover-metric}
 g_\Phi=\PhiMap^*g_{\mathrm{Euc}}
 =|dz|^2+4|w|^2|dw|^2.
\end{equation}
This is not the Euclidean metric on the chosen coordinates on the cover. Arbitrary changes of the metric on the cover are not equivalent to changes of covering map over the fixed Euclidean base. In particular, a base dilation lifts to $(z,w)\mapsto(rz,\sqrt r\,w)$, and
\[
 \PhiMap^{-1}(\Sph^3\setminus P)=\{|z|^2+|w|^4=1,\ w\neq0\}.
\]
The round-sphere observation below is therefore algebraic; it does not allow to transfer minimisation, stability or uniqueness between the two metrics.

\begin{proposition}[The lift and its closure]\label{prop:Clifford-closure}
The lift of $C$ and its closure in $\R^4$ are
\begin{equation}\label{eq:tildeC}
 \begin{aligned}
 \tCone&:=\PhiMap^{-1}(C\cap X)
 =\{(z,w):\Im(z\overline w)=0,\ w\neq0\},\\
 Q&:=\overline{\tCone}^{\,\R^4}=\{\Im(z\overline w)=0\}.
 \end{aligned}
\end{equation}
The set $\tCone$ is dilation-invariant but does not contain the vertex. The closed link $L:=Q\cap\Sph^3$ is a Clifford torus up to isometry. The actual lifted link $\tCone\cap\Sph^3=L\setminus C_z$ is an open cylinder.
\end{proposition}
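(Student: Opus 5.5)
Since the proposition is purely algebraic, I will verify it directly from the implicit description \eqref{eq:cone-implicit} of $C$ and the explicit form of $\PhiMap$.

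\emph{The lift.} For $(z,w)\in X$ we have $\PhiMap(z,w)=(z,w^2)\in C$ if and only if $z^2\overline{w}^2=(z\overline w)^2\in\R_{\geq0}$. A complex number $\zeta$ satisfies $\zeta^2\in\R_{\geq0}$ exactly when $\zeta\in\R$. So the condition becomes $z\overline w\in\R$, that is, $\Im(z\overline w)=0$, and adding $w\neq0$ gives the first line of \eqref{eq:tildeC}. Dilation invariance is then immediate: the defining equation is homogeneous of degree two, and $w\neq0$ is preserved by positive scalings. The vertex is excluded because $w=0$ there.

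\emph{The closure.} The set $Q=\{\Im(z\overline w)=0\}$ is closed and contains $\tCone$, so it suffices to show that every point of $Q$ with $w=0$ is a limit of points of $\tCone$. Every point $(z,0)$ lies in $Q$, since $z\overline w=0$. For $z\neq0$ take $w_\varepsilon=\varepsilon z$; for $z=0$ take $w_\varepsilon=\varepsilon$. In both cases $(z,w_\varepsilon)\in\tCone$ and $(z,w_\varepsilon)\to(z,0)$.

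\emph{The link.} Write $z\overline w=\tfrac12(\dots)$; more concretely, $\Im(z\overline w)=x_2x_3-x_1x_4$ is a real quadratic form of signature $(2,2)$. Diagonalise it by the orthogonal change $u_\pm=(x_2\pm x_4)/\sqrt2$ in one coordinate pair and $v_\pm=(x_3\mp x_1)/\sqrt2$ in the other, or by any explicit rotation of $\R^4$. Then $Q$ becomes $\{|a|^2=|b|^2\}$ for suitable orthonormal complex coordinates $(a,b)$. Consequently
\[
L=Q\cap\Sph^3=\{|a|=|b|=1/\sqrt2\},
\]
which is the Clifford torus. The main care needed is to check that the chosen linear change is genuinely orthogonal; that the form has eigenvalues $\pm\tfrac12$ with two-dimensional eigenspaces is what makes this possible.

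\emph{The lifted link.} Finally, $\tCone\cap\Sph^3=L\cap\{w\neq0\}=L\setminus(L\cap\{w=0\})$. Since every $(z,0)$ lies in $Q$, we have $L\cap\{w=0\}=C_z$. The circle $C_z=\{w=0\}\cap\Sph^3$ is a great circle lying in the Clifford torus. Viewed in the orthonormal coordinates $(a,b)$, it is a closed curve on the torus that is non-contractible in the torus; it is in fact a $(1,\pm1)$ curve. Removing a non-separating embedded essential circle from a torus leaves an open annulus, i.e.\ an open cylinder. As a cross-check, I would parametrise $\tCone\cap\Sph^3$ directly: write $w=|w|e^{i\beta}$ with $\beta\in\R/2\pi\Z$, and $z=s e^{i\beta}$ with $s\in\R$ and $s^2+|w|^2=1$, $|w|>0$. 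This gives the product $(-1,1)\times\Sph^1$, confirming the cylinder structure.
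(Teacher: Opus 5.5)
Your proposal is correct and follows essentially the same route as the paper. Both use the lift condition $(z\overline w)^2\in\R_{\geq0}\iff z\overline w\in\R$, then an orthogonal change of coordinates turning $\Im(z\overline w)$ into $\frac12(|a|^2-|b|^2)$, then removal of an essential circle from the torus. Your closure argument, approximating $(z,0)$ by $(z,\varepsilon z)$ or $(0,\varepsilon)$, fills in a step the paper only asserts.

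\textbf{The explicit substitution is wrong.} The change $u_\pm=(x_2\pm x_4)/\sqrt2$, $v_\pm=(x_3\mp x_1)/\sqrt2$ does not diagonalise the form. Expanding gives $x_2x_3-x_1x_4=u_+v_++u_-v_-$, which is still a mixed pairing. The coupled coordinates are $(x_2,x_3)$ and $(x_1,x_4)$. With $p_\pm=(x_2\pm x_3)/\sqrt2$ and $q_\pm=(x_1\mp x_4)/\sqrt2$ one gets $\Im(z\overline w)=\frac12\bigl(p_+^2+q_+^2-p_-^2-q_-^2\bigr)$, so $a=p_++iq_+$, $b=p_-+iq_-$ do the job. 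The paper instead uses the unitary $U$ with $|\zeta|^2-|\omega|^2=-2\Im(z\overline w)$. Your eigenvalue claim ($\pm\frac12$, each of multiplicity two) is correct, so the spectral theorem still supplies such coordinates. This is an error in the illustration, not a gap.

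\textbf{The cylinder statement.} In the coordinates above, $C_z$ becomes the diagonal $a=b=ie^{-it}/\sqrt2$, which confirms your otherwise unproved $(1,\pm1)$ claim. Your cross-check already proves the cylinder statement on its own, though. The map $(s,\beta)\mapsto(se^{i\beta},\sqrt{1-s^2}\,e^{i\beta})$ is a continuous bijection from $(-1,1)\times\R/2\pi\Z$ onto $\tCone\cap\Sph^3$. Its inverse $(z,w)\mapsto(\Re(z\overline w)/|w|,\,w/|w|)$ is continuous. This avoids the topological fact about essential curves on a torus that both you and the paper invoke.
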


\begin{proof}
The lift condition is
\[
 z^2\overline{w^2}=(z\overline w)^2\in\R_{\geq0},
\]
which is equivalent to $z\overline w\in\R$, with $w\neq0$ retained from the base. Allowing $w=0$ gives precisely the closure $Q$. The points of $L$ have the form
\[
 \widetilde f(s,t)=(\cos s\,e^{it},\sin s\,e^{it}),
 \qquad s,t\in\R/2\pi\Z.
\]
The full parameter torus covers $L$ twice, with the identification $(s,t)\sim(s+\pi,t+\pi)$.

For an explicit isometry, put
\[
 U=\frac1{\sqrt2}
 \begin{pmatrix}1&-i\\-i&1\end{pmatrix}\in SU(2),
 \qquad
 (\zeta,\omega)=U(z,w)
 =\left(\frac{z-iw}{\sqrt2},\frac{-iz+w}{\sqrt2}\right).
\]
Then
\[
 |\zeta|^2-|\omega|^2
 =\frac12\bigl(|z-iw|^2-|w-iz|^2\bigr)
 =-2\Im(z\overline w).
\]
Thus
\[
 U(L)=T_{\mathrm{can}}
 :=\{|\zeta|=|\omega|=1/\sqrt2\},
 \qquad U(Q)=C(T_{\mathrm{can}}).
\]
The omitted circle $C_z$ maps to
$\{\omega=-i\zeta\}\cap T_{\mathrm{can}}$, an essential circle of slope $(1,1)$. It is nonseparating, so its complement is an open cylinder.

Equivalently, the Hopf map
\[
 \mathfrak H(z,w)=
 \bigl(2\Re(z\overline w),2\Im(z\overline w),|z|^2-|w|^2\bigr)
 =:(y_1,y_2,y_3)
\]
satisfies $L=\mathfrak H^{-1}(\Sph^2\cap\{y_2=0\})$. Here it is the inverse image of a great circle that is a torus; individual Hopf fibres are circles. This identifies the closed link, not the punctured lifted link.
\end{proof}

\section{Ros's quotient-helicoid theorem}\label{appendix:Ros-theorem}
\subsection{The area-minimising quotient helicoid}\label{subsec:quotient-helicoid}

We recall Ros's proof of \cite[Theorem~13, pp.~82--84]{Ros2006}, with the homological and limiting comparisons made explicit. In this appendix $\ell>0$ denotes the translation length, not the pitch. Set
\[
 N^{\ell}:=\R^3/\langle\tau^{\ell}\rangle,\qquad
 \tau^{\ell}(x_1,x_2,x_3)=(x_1+\ell,x_2,x_3),
\]
and let $\mathscr H_{\ell}\subset N^{\ell}$ be the image of
\[
 X_{\ell}(s,t)=\left(\frac{\ell}{\pi}t,s\cos t,s\sin t\right),
 \qquad s,t\in\R.
\]
The relation $X_{\ell}(-s,t+\pi)=\tau^{\ell}X_{\ell}(s,t)$ makes this a
properly embedded nonorientable minimal surface, topologically an open
M\"obius band. After a permutation of coordinates it is
$\overline H_{\ell/\pi}$ from Section~\ref{sec:helicoid}; thus
$\ell=\pi a$ in the pitch notation used there.

We use two results of Ros as black-boxes: the uniform curvature estimate for complete one-sided stable minimal surfaces in flat three-manifolds of uniformly bounded geometry, and the classification of complete noncompact nonorientable stable minimal surfaces in a rank-one translation quotient as quotient helicoids of total curvature $-2\pi$; see \cite[Corollary~11 and Theorem~12]{Ros2006}. Primitivity of the period will be deduced from this total curvature condition, not from embeddedness alone.

\begin{theorem}[Ros]\label{thm:quotient-helicoid-minimizing}
For every $\ell>0$, the chain $\bracket{\mathscr H_\ell}$ is locally mass-minimising modulo two in $N^\ell$. In particular, every smooth compact subdomain of $\mathscr H_\ell$ minimises area among compact orientable or nonorientable surfaces with the same boundary.
\end{theorem}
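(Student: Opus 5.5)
We follow Ros's strategy: a direct minimisation in the homology class, followed by a compactness and classification argument. The paper says it will use \cite[Corollary~11 and Theorem~12]{Ros2006} as black boxes. Fix a smooth compact subdomain $\Omega\subset\mathscr H_\ell$ with boundary $\Gamma=\partial\Omega$. By the argument of \Cref{lem:Ros-flat-chain}, it suffices to show that $\Omega$ has least area among smooth compact surfaces $\Sigma'$ with $\partial\Sigma'=\Gamma$.

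\textbf{Step 1: reduction to a homology class.} Given such a $\Sigma'$, the chain $\bracket{\Omega}+\bracket{\Sigma'}$ is a mod-two cycle in $N^\ell\simeq\R^2\times\Sph^1$. Since $H_2(N^\ell;\Z_2)=0$, it bounds. So it suffices to prove minimisation within the fixed mod-two homology class relative to $\Gamma$. To pass to the whole surface, we exhaust $\mathscr H_\ell$ by compact pieces
\[
 \Omega_j=\mathscr H_\ell\cap\{x_2^2+x_3^2\le j^2\}.
\]
These are M\"obius bands whose boundary curves $\Gamma_j$ are single closed curves winding twice around the $\Sph^1$ factor. We will show that each $\Omega_j$ is area-minimising in its class; subdomains then inherit minimality by cut-and-paste.

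\textbf{Step 2: minimisers and their limit.} For each $j$, the nearest-point retraction argument of \Cref{lem:Ros-flat-chain} together with compactness produces a mod-two minimiser $M_j$ with $\partial M_j=\Gamma_j$. By interior and boundary regularity, $M_j$ is a smooth embedded surface. Every minimiser is stable, and one-sided stable surfaces satisfy Ros's curvature estimate \cite[Corollary~11]{Ros2006} away from the boundary. After translating along the axis, a subsequence of the $M_j$ converges smoothly on compact sets to a complete, properly embedded, stable minimal surface $M_\infty$. Since $\Gamma_j$ winds twice, the slicing of $M_j$ by the radial distance $\sqrt{x_2^2+x_3^2}$ produces cycles that are nontrivial in mod-two homology. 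This forces $M_\infty$ to be noncompact, to pass near the axis, and to be nonorientable, since its intersection with large tubes must have a connected boundary of winding number two. By \cite[Theorem~12]{Ros2006}, $M_\infty$ is a quotient helicoid with total curvature $-2\pi$. The total curvature condition forces the primitive period $\ell$, the same axis and the same pitch, so $M_\infty=\mathscr H_\ell$ up to a rotation about the axis.

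\textbf{Step 3: comparison and the main obstacle.} We want to show that $M_j=\Omega_j$ for large $j$, or at least that
\[
 \mathcal H^2(M_j)\ge\mathcal H^2(\Omega_j)-o(1).
\]
We would then combine this with the exhaustion, using a calibration-free cut-and-paste. Suppose some compact $\Omega$ failed to minimise with defect $\delta>0$. Splicing the better competitor into $\Omega_j$ would lower the area of $\Omega_j$ by $\delta$. Then the limit of the $M_j$ would differ from $\mathscr H_\ell$ on a fixed compact set, or the areas would not converge. Either alternative contradicts the smooth convergence of $M_j$ to $\mathscr H_\ell$ and the lower semicontinuity of mass. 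The main obstacle is precisely this last step. It requires making the homological bookkeeping explicit, in particular showing that the limit is not the orientable helicoid or a plane, and identifying the limit rigidly with $\mathscr H_\ell$ rather than with a helicoid of a multiple period. The total curvature $-2\pi$ from Ros's classification is what rules out multiple periods. To handle the defect argument, I would first try a monotonicity-style localisation, comparing $\Omega_j$ and $M_j$ on a fixed ball where the convergence is smooth.
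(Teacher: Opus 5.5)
There is a genuine gap, and it lies in Step~2, not only where you place the main obstacle: nothing you establish prevents the limit $M_\infty$ from being flat. Mod-two bookkeeping cannot see ``winding number two''. On a torus $\{x_2^2+x_3^2=R^2\}$, the curve $\Gamma_j$ and any radial slice of $M_j$ homologous to it have class $(2,1)\equiv(0,1)$ in $H_1(\,\cdot\,;\Z_2)$. Here the first entry counts turns around the $\Sph^1$ factor. This is exactly the class of a meridian circle, that is, of the radial slices of the flat cross-sections $\{x_1=c\}$. These planes meet the axis, and they are complete, properly embedded, two-sided and even mass-minimising (project onto the $(x_2,x_3)$-plane). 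So they are consistent with every property you transfer to $M_\infty$.

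Connectedness of slices is not preserved under smooth convergence on compact sets. A priori the curvature of $M_j$ could concentrate near $\Gamma_j$ while the part near the axis flattens out. A two-sided stable surface with quadratic area growth is flat (the log-cutoff argument), so everything hinges on a uniform lower curvature bound at the base points, and your boundary-value problems give none.

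This is what the paper's detour through the compact quotients $M_{A,B}$ supplies:
\begin{enumerate}
\item the class $\alpha_1+\alpha_3$ has no totally geodesic representative, so the minimiser has a nonflat component;
\item surjectivity of the Gauss map on the torus cover gives a point with normal $e_1$;
\item injectivity of the normal exponential map below the focal radius $1/c_{A,B}$, combined with the period $1$ in the $e_1$-direction, yields $c_{A,B}\geq2$;
\item basing the limit at a curvature maximum makes $\Sigma_\infty$ nonflat, hence one-sided, so Ros's classification applies.
\end{enumerate}

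Step~3 is also aimed at the wrong target. Comparing $\mathcal H^2(M_j)$ with $\mathcal H^2(\Omega_j)$ cannot produce a contradiction. Both areas diverge, and smooth convergence on compact sets says nothing about where a deficit $\delta$ sits, so neither alternative you list actually has to occur.

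What must be passed to the limit is minimisation itself, not just stability. Suppose a compactly supported $\partial V$ lowered the mass of a domain in $M_\infty$. Then nearby domains of $M_j$ and the homotopy formula (swept chains whose boundary part has mass tending to zero) give a competitor for $M_j$ with the same boundary and strictly smaller mass. Together with $H_2(N^\ell;\Z_2)=0$, this makes $\bracket{M_\infty}$ locally mass-minimising. Once total curvature $-2\pi$ forces the primitive period, so that $M_\infty$ is congruent to $\mathscr H_\ell$, the theorem follows at once. There is no need to show $M_j=\Omega_j$ or to control global area defects.
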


\begin{proof}
A homothety reduces the assertion to $\ell=1$. Write
$N=N^1=\R^3/\langle\tau\rangle$, where $\tau(x)=x+e_1$.
For $A,B\geq1$, let $G_{A,B}$ be generated by
\[
 \begin{gathered}
 v_1(x)=x+(1,0,0),\qquad v_2(x)=x+(0,0,A),\\
 g(x_1,x_2,x_3)=(-x_1,x_2+B,x_3).
 \end{gathered}
\]
The map $g$ is a glide reflection (see \cite{Ros2006} for the terminology). The quotient $M_{A,B}=\R^3/G_{A,B}$ is the product $K_{x_1,x_2}\times S^1_{x_3}$ of a flat Klein bottle with a circle. Its translation lattice is generated by $(1,0,0)$, $(0,2B,0)$ and $(0,0,A)$ and has index two in $G_{A,B}$.

\smallskip\noindent
K\"unneth's formula gives $H_2(M_{A,B};\Z_2)\simeq(\Z_2)^3$. Its coordinate classes are
\[
 \alpha_1=[x_3=0],\qquad \alpha_2=[x_2=0],\qquad \alpha_3=[x_1=0].
\]
They are represented respectively by a Klein bottle and two tori. Suppose an embedded closed union of totally geodesic surfaces represents some class. Its lifts are disjoint affine planes in $\R^3$, so all these planes are parallel: two nonparallel planes would intersect. For a normal $(n_1,n_2,n_3)$, invariance under the glide reflection requires $(-n_1,n_2,n_3)$ to be parallel to it. Thus either $n_1=0$ or $n_2=n_3=0$.

In the first case every plane contains the $x_1$-direction; the union comes from curves in the $(x_2,x_3)$-torus and its class lies in $\operatorname{span}\{\alpha_1,\alpha_2\}$. In the second case the planes are $x_1=\mathrm{constant}$, and their classes lie in the $H_1(K;\Z_2)\otimes H_1(S^1;\Z_2)$ summand, $\operatorname{span}\{\alpha_2,\alpha_3\}$. The two types cannot occur in the same disjoint union, again because their lifts would intersect. Therefore the class $\alpha:=\alpha_1+\alpha_3$ has no totally geodesic representative. This is the planar-union argument in Ros's construction.

Choose a mass-minimising cycle in the homology class $\alpha$. Existence and codimension-one regularity give a smooth embedded closed minimal surface, possibly disconnected. It has a nonflat component; keep one and denote it by $\Sigma_{A,B}$. This component still minimises mass in its own mod-two homology class: replacing it by a homologous chain of smaller mass and retaining the other components would decrease the mass of the original cycle, since any cancellations only decrease mass. In particular it is stable. We do \emph{not} assert minimisation against
all cycles in the compact manifold $M_{A,B}$.

The component is one-sided. Otherwise the constant function $1$ would be an admissible normal variation, and stability in the flat ambient manifold would then imply $0\leq-\int_{\Sigma_{A,B}}|A_{\Sigma_{A,B}}|^2$, contrary to being non-flat.

\smallskip\noindent
Set $c_{A,B}^2=\max_{\Sigma_{A,B}}|K_{\Sigma_{A,B}}|$. First lift
$\Sigma_{A,B}$ to the compact flat-torus cover associated with the translation lattice, keep a nonflat component, and then take its orientable cover if needed. Relative to the parallel frame on the flat torus, the Gauss map of this compact minimal surface is a nonconstant meromorphic map to $S^2$, hence is surjective. Descending a point with normal parallel to $e_1$ gives $q_{A,B}\in\Sigma_{A,B}$ with that normal direction. 

The normal exponential map is injective on the dics in the normal bundle of radius $r<c_{A,B}^{-1}$. To see this, it is initially injective and has no focal points in that range. With shape operator $-d\nu$, a parallel sheet at distance $r$ has principal curvatures
\[
 \frac{\lambda}{1-r\lambda},\qquad
 \frac{-\lambda}{1+r\lambda},
\]
so its scalar mean curvature (half the trace) relative to the outward normal of the tube is
\begin{equation}\label{eq:parallel-mean-curvature}
 H_r=\frac{r\lambda^2}{1-r^2\lambda^2}\geq0.
\end{equation}
At a first self-contact before the focal radius, the two local parallel sheets have facing outward normals. Write them as graphs $u\leq v$ over the common tangent plane, choosing the upward normal to agree with the outward normal of the lower sheet. Their mean-curvature inequalities are then opposite: the graph mean-curvature operator satisfies $\mathcal M(u)\geq0\geq\mathcal M(v)$. The strong maximum principle for $v-u\geq0$ forces coincidence on a neighbourhood. The two inequalities are then equalities, so \eqref{eq:parallel-mean-curvature} gives zero curvature on an open patch of the original surface. Unique continuation
contradicts nonflatness. This also covers contact at a point where the curvature was initially zero.

The normal geodesic at $q_{A,B}$ is parallel to the period-one direction $e_1$. Injectivity on the interval $(-c_{A,B}^{-1},c_{A,B}^{-1})$ gives
\begin{equation}\label{eq:curvature-lower-helicoid}
 \frac2{c_{A,B}}\leq1,\qquad c_{A,B}\geq2.
\end{equation}
The flat manifolds $M_{A,B}$ have a uniform positive injectivity-radius bound for $A,B\geq1$. Ros's curvature estimate (as recalled before the statement of the theorem) therefore gives
\begin{equation}\label{eq:curvature-upper-helicoid}
 c_{A,B}\leq C_0
\end{equation}
with $C_0$ independent of $A,B$.

\smallskip\noindent
Choose $A_j,B_j\to\infty$ and a base point $p_j\in\Sigma_j:=\Sigma_{A_j,B_j}$ where $|K_{\Sigma_j}|=c_{A_j,B_j}^2$. The pointed ambient manifolds converge to $(N,0)$. Fix $r_0<1/(2C_0)$. The embedded normal tube of radius
$r_0$ has Jacobian $1-r^2\lambda^2\geq3/4$. Its part over $\Sigma_j\cap B_L(p_j)$ lies in $B_{L+r_0}(p_j)$, hence
\begin{equation}\label{eq:tube-area-bound}
 \frac32r_0\,\mathcal H^2(\Sigma_j\cap B_L(p_j))
 \leq\operatorname{Vol}_{M_{A_j,B_j}}(B_{L+r_0}(p_j)).
\end{equation}
This supplies uniform local bounds. Curvature estimates then give smooth convergence on compact sets to a properly embedded limit, possibly disconnected. The convergence has multiplicity one: two sheets approaching the same limit sheet would have intersecting normal tubes of radius $r_0$.

Keep the connected component $\Sigma_\infty$ through $0$. It is proper and complete, and \eqref{eq:curvature-lower-helicoid} makes it nonflat. Furthermore, it cannot be compact. 
The limit inherits minimisation among compactly supported boundaries $\partial V$. To make the comparison explicit, suppose such a perturbation strictly decreases mass. Choose a smooth compact domain $\Omega\subset\Sigma_\infty$ whose interior contains $\Sigma_\infty\cap\spt(\partial V)$, and put $E=\bracket{\Omega}$. Then,
\[
 \M(E+\partial V)<\M(E).
\]
Identify a fixed ambient neighbourhood of $\spt V\cup\Omega$ with its image in $M_{A_j,B_j}$ for large $j$. Smooth multiplicity-one convergence gives nearby domains $\Omega_j\subset\Sigma_j$ and homotopies from $\Omega$ to $\Omega_j$. If $E_j=\bracket{\Omega_j}$, the homotopy formula gives swept chains $G_j$ of dimension three and $B_j$ of dimension two with
\[
 \partial G_j=E_j+E+B_j,\qquad
 \partial B_j=\partial E_j+\partial E,\qquad
 \M(B_j)\rightarrow0.
\]
Here $B_j$ is the swept boundary of $\Omega$, so the last assertion follows
from $C^1$ convergence. Also $\M(E_j)\to\M(E)$. The homologous competitor
\[
 \bracket{\Sigma_j}+\partial(V+G_j)
 =\bracket{\Sigma_j}+E_j+(E+\partial V)+B_j
\]
has mass at most
\[
 \M(\bracket{\Sigma_j})-\M(E_j)+\M(E+\partial V)+\M(B_j)
 <\M(\bracket{\Sigma_j})
\]
for large $j$, a contradiction. This proves the comparison for the selected component, without assuming that the entire pointed limit is connected.

Now $H_2(N;\Z_2)=H_2(S^1\times\R^2;\Z_2)=0$, so every compactly supported finite-mass rectifiable $2$-cycle in $N$ bounds a compactly supported finite-mass $3$-chain. Hence the comparison extends to all compact cycles, and $\bracket{\Sigma_\infty}$ is locally mass-minimising in the sense of Section~\ref{subsec:flat-chains}.

For completeness, two-sidedness can be excluded directly. The tube bound passes to the proper limit; since $N=S^1\times\R^2$ has quadratic volume growth, it gives $\mathcal H^2(\Sigma_\infty\cap\{|(x_2,x_3)|\leq R\})\leq C_1R^2$ for $R\geq1$. If $\Sigma_\infty$ were two-sided, use the logarithmic cutoff equal to $1$ for $|(x_2,x_3)|\leq R$, equal to $0$ for $|(x_2,x_3)|\geq R^2$, and linear in $\log|(x_2,x_3)|$ in between. Properness makes its support compact, and the area bound gives $\int|\nabla\chi_R|^2\leq C_2/\log R$. Stability yields
\[
 \int_{\Sigma_\infty}|A|^2\chi_R^2
 \leq\int_{\Sigma_\infty}|\nabla\chi_R|^2\longrightarrow0.
\]
Thus $A\equiv0$, a contradiction. Since $N$ is orientable, the one-sided surface $\Sigma_\infty$ is nonorientable.

\smallskip\noindent
Ros's rank-one stable classification now identifies $\Sigma_\infty$ as a quotient helicoid of total curvature $-2\pi$. Write its pitch as $b>0$. Invariance under the ambient period $1$ gives $1=m\pi b$ for a positive integer $m$; nonorientability makes $m$ odd. A fundamental quotient domain has total curvature
\[
 -\int_0^{m\pi}\int_\R
 \frac{b^2}{(b^2+s^2)^{3/2}}\dd s\dd t=-2\pi m.
\]
Thus $m=1$, $b=1/\pi$, and $\Sigma_\infty$ is congruent to
$\mathscr H_1$. Embeddedness alone would not give this conclusion: quotienting a helicoid by an odd multiple of its half-turn translation still gives an embedded nonorientable surface.

It follows that $\bracket{\mathscr H_1}$ is locally mass-minimising. Given a smooth compact subdomain $\Omega\subset\mathscr H_1$ and a compactly supported finite-mass chain $S$ with $\partial S=\partial\bracket{\Omega}$, set $Q=S+\bracket{\Omega}$ and choose $W\Subset N$ containing both supports. The local comparison and subadditivity give
\[
 \M(\bracket{\mathscr H_1}\mres W)
 \leq\M((\bracket{\mathscr H_1}+Q)\mres W)
 \leq\M(\bracket{\mathscr H_1\setminus\Omega}\mres W)+\M(S).
\]
Cancelling the common part proves $\mathcal H^2(\Omega)\leq\M(S)$. Finally, the homothety $x\mapsto\ell x$ sends $N^1$ to $N^\ell$ and $\mathscr H_1$ to $\mathscr H_\ell$, multiplying areas by $\ell^2$.
\end{proof}

\bibliographystyle{amsalpha}
\bibliography{arxiv}

\end{document}